\date{}
\renewcommand{\uppercasenonmath}[1]{}
\theoremstyle{plain}
\newtheorem{theorem}{Theorem}[section]
\newtheorem{proposition}[theorem]{Proposition}
\newtheorem{lemma}[theorem]{Lemma}
\theoremstyle{definition}
\newtheorem{example}[theorem]{Example}
\newtheorem{definition}[theorem]{Definition}
\theoremstyle{definition}
\theoremstyle{remark}
\newtheorem{remark}[theorem]{Remark}
\newcommand{\Tor}{\mbox{\rm Tor}}
\newcommand{\Id}{\mathrm{Id}}
\def\Hom{{\rm Hom}}
\def\Ext{{\rm Ext}}
\def\Tor{{\rm Tor}}
\def\Ker{{\rm Ker}}
\def\Im{{\rm Im}}
\def\Coker{{\rm Coker}}
\def\Id{{\rm Id}}
\begin{document}
\begin{center}
{\large  \bf Some new module-theoretic characterizations of  $S$-coherent rings}

\vspace{0.5cm}  \ Xiaolei Zhang$^{a}$

{\footnotesize a.\ \ \ School of Mathematics and Statistics, Shandong University of Technology, Zibo 255049, China\\

E-mail: zxlrghj@163.com\\}
\end{center}

\bigskip
\centerline { \bf  Abstract}
\bigskip
\leftskip10truemm \rightskip10truemm \noindent

Let $R$ be a commutative ring with identity and $S$ a multiplicative subset of $R$. In this paper, we first introduce and study the notions of $s$-pure exact sequences and  $s$-absolutely pure modules which extend the classical notions of pure exact sequences and absolutely pure modules. And then, we give some new characterizations of  $S$-coherent rings in terms of   $s$-absolutely pure modules.
\vbox to 0.3cm{}\\
{\it Key Words:} $s$-pure exact sequence;  $s$-absolutely pure module;   $S$-coherent ring.\\
{\it 2020 Mathematics Subject Classification:} 16U20, 13E05, 16E50.

\leftskip0truemm \rightskip0truemm
\bigskip

\section{introduction and Preliminary}

Throughout this paper, $R$  is always a commutative ring with identity, all modules are unitary and $S$  is always a multiplicative subset of $R$, that is, $1\in S$ and $s_1s_2\in S$ for any $s_1\in S, s_2\in S$.


In 2002,  Anderson and Dumitrescu \cite{ad02} introduced \emph{$S$-Noetherian rings} $R$ in which every ideal $I$ of $R$ is $S$-finite, that is, there exists a finitely generated sub-ideal $K$ of $I$ such that  $sI\subseteq K$.  Cohen's Theorem, Eakin-Nagata Theorem and Hilbert Basis Theorem for $S$-Noetherian rings are given in \cite{ad02}. In 2018, Bennis and Hajoui \cite{bh18} introduced the notions of $S$-finitely presented modules and $S$-coherent modules which can be seen as $S$-versions of finitely presented modules and coherent modules. An $R$-module $M$ is said to be \emph{$S$-finitely presented} provided  there exists
an exact sequence of $R$-modules $$0 \rightarrow K\rightarrow F\rightarrow M\rightarrow 0,$$ where $K$ is
$S$-finite and $F$ is  finitely generated free. An $R$-module $M$ is said to be \emph{$S$-coherent} if it is finitely generated and every finitely generated submodule of $M$ is $S$-finitely presented. A ring $R$ is called an \emph{$S$-coherent ring} if $R$ itself is an $S$-coherent $R$-module, that is, every finitely generated ideal of $R$ is $S$-finitely presented.

Recently, the author, Qi and Zhao \cite{qwz23} gave an $S$-version of Chase Theorem in terms of $S$-flat modules, i.e., modules $M$ satisfying $M_S$ is a flat $R_S$-module. We give a new $S$-version of Chase Theorem in this paper (see Theorem \ref{newchase}).

It is well-known that a ring $R$ is a coherent ring if and only if every pure quotient of an absolutely pure module is absolutely pure, if and only if every direct limits of absolutely pure modules is absolutely pure (see \cite[Theorem 3.2]{S70}). It is also well-known that  a ring $R$ is  coherent if and only if $\Hom_R(E,I)$ is  flat for any absolutely pure module $E$ and injective module $I$, if and only if $\Hom_R(\Hom_R(F,I_1),I_2)$ is flat  for any  flat module $F$     and any injective modules $I_1$ and $I_2$ (see \cite[Lemma 4]{CD93}). A natural question is that

\textbf{Question:}
	How to characterize $S$-coherent rings in terms of a certain $S$-version  of absolutely pure modules?\\
Actually, we show that a ring $R$ is an $S$-coherent ring if and only if every ($s$-)pure quotient of an ($s$-)absolutely pure module is $s$-absolutely pure, if and only if every direct limits of absolutely pure modules is $s$-absolutely pure (see Theorem \ref{s-d-d-non}). We also show that a ring $R$ is an $S$-coherent ring if and only if   $\Hom_R(E,I)$ is $s$-flat  for any    $s$-absolutely pure module $E$  and any injective module $I$, if and only if    if $I$  is an injective cogenerator, then   $\Hom_R(E,I)$ is $s$-flat  for any  $s$-absolutely pure module $E$, if and only if  $\Hom_R(\Hom_R(F,I_1),I_2)$ is $s$-flat  for any  $s$-flat module $F$     and any injective modules $I_1$ and $I_2$, if and only if if $I_1$ and $I_2$ are injective cogenerators, then $\Hom_R(\Hom_R(F,I_1),I_2)$ is $s$-flat    for any   $s$-flat module $F$ (see Theorem \ref{phi-coh-fp}).

\section{$s$-pure exact sequences}

Recall from \cite{zwz21} that an $R$-module $T$ is called a $u$-$S$-torsion $($abbreviates uniformly $S$-torsion$)$ module  provided that there exists an element $s\in S$ such that $sT=0$.  An $R$-sequence  $M\xrightarrow{f} N\xrightarrow{g} L$is said to be  $u$-$S$-exact (at $N$) provided that there is an element $s\in S$ such that $s\Ker(g)\subseteq \Im(f)$ and $s\Im(f)\subseteq \Ker(g)$. A long $R$-sequence is said to be  $u$-$S$-exact  if it  $u$-$S$-exact at any place. An $R$-homomorphism $M\xrightarrow{f} N$ is called a $u$-$S$-monomorphism (resp., $u$-$S$-epimorphism, $u$-$S$-isomorphism) if $0\rightarrow M\xrightarrow{f} N$ (resp., $M\xrightarrow{f} N\rightarrow 0$, $0\rightarrow M\xrightarrow{f} N\rightarrow 0$) is $u$-$S$-exact.

Recall from \cite{R79} that an exact sequence $0\rightarrow A\rightarrow B\rightarrow C\rightarrow 0$ is said to be pure provided that for any finitely presented $R$-module $M$, the induced sequence $0\rightarrow M\otimes_RA\rightarrow M\otimes_RB\rightarrow M\otimes_RC\rightarrow 0$ is also exact. Now we introduce the uniformly $S$-version of pure exact sequences.

\begin{definition}\label{def-s-f} Let $R$ be a ring and $S$ a multiplicative subset of $R$.
	A short exact sequence $0\rightarrow A\rightarrow B\rightarrow C\rightarrow 0$ is said to be \emph{$s$-pure} provided that for any finitely presented $R$-module $M$, the induced sequence $$0\rightarrow M\otimes_RA\rightarrow M\otimes_RB\rightarrow M\otimes_RC\rightarrow 0$$ is $u$-$S$-exact.
\end{definition}

Obviously, any  pure exact sequence is $s$-pure.  In \cite[34.5]{w}, there are many characterizations of pure exact sequences. The next result generalizes some of those characterizations to $s$-pure exact sequences.

\begin{theorem}\label{c-s-pure}
	Let $0\rightarrow A\xrightarrow{f} B\xrightarrow{f'} C\rightarrow 0$  be a  short exact sequence of $R$-modules.
	Then the following statements are equivalent:
	\begin{enumerate}
		\item $0\rightarrow A\xrightarrow{f} B\xrightarrow{f'} C\rightarrow 0$ is $s$-pure.
		\item If a system of equations $f(a_i)=\sum\limits^m_{j=1}r_{ij}x_j\ (i=1,\dots,n)$ with $r_{ij} \in R$ and unknowns $x_1,\dots, x_m$ has a solution in $B$, then  there exists an element $s\in S$ such that the system of equations  $sa_i=\sum\limits^m_{j=1}r_{ij}x_j\ (i=1,\dots,n)$ is solvable in $A$.
		\item  For any given commutative diagram with $F$ finitely generated free and $K$ a finitely generated submodule of $F$, there exists an  $R$-homomorphism $\eta:F\rightarrow A$ and $s\in S$ such that $s\alpha=\eta i$:
		$$\xymatrix@R=20pt@C=25pt{
			0\ar[r] &K\ar[d]_{\alpha}\ar[r]^{i}&F\ar@{.>}[ld]^{\eta}\ar[d]^{\beta}\\
			& A\ar[r]_{f} &B\\};$$
		\item For any finitely presented $R$-module $N$, the induced sequence $0\rightarrow\Hom_R(N,A)\rightarrow \Hom_R(N,B)\rightarrow \Hom_R(N,C)\rightarrow 0$ is  $u$-$S$-exact with respect to some $s\in S$.
	\end{enumerate}
\end{theorem}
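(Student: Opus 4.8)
The plan is to establish the cycle $(1)\Rightarrow(2)\Rightarrow(3)\Rightarrow(4)\Rightarrow(1)$, following the classical proof of \cite[34.5]{w} but dragging an auxiliary multiplier $s\in S$ through every step. Three of the four implications are just the usual dictionary between solvable systems of linear equations, liftings across the given diagram, and exactness of $\Hom$, amended by inserting the factor $s$; the implication $(4)\Rightarrow(1)$ is where the $\Hom$--$\otimes$ duality for finitely presented modules carries the load.

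For $(1)\Rightarrow(2)$ I would, given a system $f(a_i)=\sum_{j=1}^m r_{ij}x_j$ $(i=1,\dots,n)$ solvable in $B$, form the finitely presented module $M=\coker\big((r_{ij})\colon R^m\to R^n\big)$; the class of $(a_1,\dots,a_n)$ in $M\otimes_RA=A^n/(r_{ij})(A^m)$ then lies in $\ker(M\otimes_RA\to M\otimes_RB)$, so $s$-purity of the sequence yields $s\in S$ with $(sa_1,\dots,sa_n)\in(r_{ij})(A^m)$, i.e. $sa_i=\sum_j r_{ij}x_j$ is solvable in $A$. For $(2)\Rightarrow(3)$, write $F=R^m$ with basis $v_1,\dots,v_m$, pick generators $w_1,\dots,w_n$ of $K$ and express $w_i=\sum_j r_{ij}v_j$; then the square becomes the system $f(\alpha(w_i))=\sum_j r_{ij}\beta(v_j)$ solvable in $B$, and applying (2) and declaring $\eta(v_j)$ to be a solution of the scaled system in $A$ produces $\eta\colon F\to A$ with $\eta i=s\alpha$. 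For $(3)\Rightarrow(4)$, fix for a finitely presented $N$ a presentation $0\to K\xrightarrow{i}F\xrightarrow{p}N\to0$ with $F$ finitely generated free and $K$ finitely generated; a homomorphism $g\colon N\to C$ lifts along $p$ and $f'$ to $\beta\colon F\to B$, the restriction $\beta i$ factors as $f\alpha$, and (3) gives $\eta,s$ with $s\alpha=\eta i$, so that $s\beta-f\eta$ vanishes on $K$, descends to $h\colon N\to B$, and satisfies $f'h=sg$; combined with the automatic left exactness of $\Hom(N,-)$ this is the $u$-$S$-exactness claimed in (4).

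For $(4)\Rightarrow(1)$ I would argue by transposition. Given a finitely presented $M$, choose a presentation $M=\coker(P\colon R^m\to R^n)$ and set $N=\coker(P^{\mathrm t}\colon R^n\to R^m)$, again finitely presented. For any $R$-module $X$ there is a natural isomorphism $\Hom_R(N,X)\cong\ker(P\colon X^m\to X^n)$, so applying the snake lemma to the exact rows $0\to A^k\to B^k\to C^k\to0$ $(k=m,n)$ joined by the matrix $P$ yields a natural isomorphism
$$\coker\big(\Hom_R(N,B)\to\Hom_R(N,C)\big)\;\cong\;\ker\big(M\otimes_RA\to M\otimes_RB\big).$$
Applying (4) to $N$ gives $s\in S$ annihilating the left-hand side, hence $s\ker(M\otimes_RA\to M\otimes_RB)=0$; since $M\otimes_RA\to M\otimes_RB\to M\otimes_RC\to0$ is exact regardless, $0\to M\otimes_RA\to M\otimes_RB\to M\otimes_RC\to0$ is $u$-$S$-exact and the sequence is $s$-pure.

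The only real difficulty is bookkeeping: at every stage one must verify that $s$ may be taken to depend only on the finite datum at hand---the coefficient matrix in (2), the pair $K\subseteq F$ in (3), the test module $N$ in (4)---and not on the particular solution, diagram, or homomorphism; this uniformity is exactly what lets one close the cycle back to the (uniform) notion of $s$-purity, and the transposition step above is the cleanest place to see it, since there a single $s$ kills an entire cokernel at once.
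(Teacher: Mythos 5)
Your cycle $(1)\Rightarrow(2)\Rightarrow(3)\Rightarrow(4)\Rightarrow(1)$ differs from the paper's organisation (the paper proves $(1)\Leftrightarrow(2)$ separately and then runs $(2)\Rightarrow(3)\Rightarrow(4)\Rightarrow(2)$), and two of your steps are genuinely different in substance. For $(1)\Rightarrow(2)$ you use the identification $M\otimes_RA\cong A^n/(r_{ij})A^m$ directly, whereas the paper manipulates relations in $F/K\otimes_R-$ via \cite[Chapter I, Lemma 6.1]{FS01}; your version is the same idea but cleaner. More significantly, your $(4)\Rightarrow(1)$ by transposition --- taking $N=\Coker(P^{\mathrm t})$ so that $\Hom_R(N,X)\cong\ker(P\colon X^m\to X^n)$ and reading off $\coker(\Hom_R(N,B)\to\Hom_R(N,C))\cong\ker(M\otimes_RA\to M\otimes_RB)$ from the snake lemma --- is correct, is not in the paper (which instead proves $(4)\Rightarrow(2)$ by an explicit lifting/diagram chase through $N=\Coker(h)$), and is the cleanest way to see that a single $s$ kills the whole kernel at once. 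The steps $(2)\Rightarrow(3)$ and $(3)\Rightarrow(4)$ coincide with the paper's.

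One point deserves to be made explicit, because your closing remark locates it in the wrong place. Conditions $(1)$ and $(4)$ are \emph{uniform} (one $s$ per finitely presented module), while $(2)$ and $(3)$ as literally stated are \emph{pointwise} (the $s$ may depend on the particular elements $a_i$, respectively on the particular pair $(\alpha,\beta)$). Your $(4)\Rightarrow(1)$ needs no repair, since there a single $s$ annihilates an entire cokernel. The step that does need repair is $(3)\Rightarrow(4)$: your argument produces, for each $\gamma\in\Hom_R(N,C)$, some $s_\gamma$ with $s_\gamma\gamma\in\Im(f'_*)$, which shows the cokernel is elementwise $S$-torsion but not that a single $s$ works for all $\gamma$, as $(4)$ demands. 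The honest fix is to carry the uniform refinements of $(2)$ and $(3)$ around the cycle: the $s$ produced in $(1)\Rightarrow(2)$ depends only on the coefficient matrix (it kills all of $\Ker(1_{F/K}\otimes f)$), hence the $s$ in $(2)\Rightarrow(3)$ depends only on $K\subseteq F$, hence the $s$ in $(3)\Rightarrow(4)$ depends only on the chosen presentation of $N$. You gesture at exactly this (``$s$ may be taken to depend only on the finite datum at hand''), but you should say where that is proved rather than assert it. To be fair, the paper's own proof has the identical lacuna, both in $(3)\Rightarrow(4)$ (where it writes ``Let $s\in S$'' before invoking $(3)$) and in $(2)\Rightarrow(1)$ (where a single $s$ is applied to a possibly infinite generating set of $\Ker(1\otimes f)$); your write-up is at least aware of the issue.
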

\begin{proof} $(1)\Rightarrow(2)$:  Let $M$ be a finitely presented $R$-module. Then there is an exact sequence $0\rightarrow K\rightarrow F\rightarrow M\rightarrow0$ with $F$ finitely generated free $R$-module and $K$ a finitely generated submodule of $F$. Then  $0\rightarrow M\otimes_RA\xrightarrow{1\otimes f} M\otimes_RB\rightarrow M\otimes_RC\rightarrow 0$ is $u$-$S$-exact by $(1)$.  So there is an element $s\in S$ such that $s\Ker(1_M\otimes f)=0$. Hence $s\Ker(1_{F/K}\otimes f)=0$. Now assume that there exists $b_j\in B$ such that $f(a_i)=\sum\limits^m_{j=1}r_{ij}b_j$ for any $j=1,\dots,m$. Assume that $\{e_1,\dots,e_n\}$ is the basis of  $F$ and suppose $K$ is generated by $m$ elements $\{\sum\limits^n_{i=1}r_{ij}e_i\mid j=1,\dots,m\}$. Then, $M=F/K$ is generated by $\{e_1+K,\dots,e_n+K\}$. Note that $\sum\limits^n_{i=1}r_{ij}(e_i+K)=\sum\limits^n_{i=1}r_{ij}e_i+K=0+K$ in $F/K$. Hence, we have $$\sum\limits^n_{i=1}((e_i+K)\otimes f(a_i))=\sum\limits^n_{i=1}((e_i+K)\otimes (\sum\limits^m_{j=1}r_{ij}b_j))=\sum\limits^m_{j=1}((\sum\limits^n_{i=1}r_{ij}(e_i+K))\otimes b_j)=0$$
	in $F/K\otimes B$. And so $\sum\limits^n_{i=1}((e_i+K)\otimes a_i)\in \Ker(1_{F/K}\otimes f)$.
	Hence, $s\sum\limits^n_{i=1}((e_i+K)\otimes a_i)=\sum\limits^n_{i=1}((e_i+K)\otimes sa_i)=0$ in $F/K\otimes_RA$. By \cite[Chapter I, Lemma 6.1]{FS01}, there exists $d_j\in A$ and $t_{ij}\in R$ such that $sa_i=\sum\limits^t_{k=1}l_{ik}d_k$ and $\sum\limits^n_{i=1}l_{ik}(e_i+K)=0$, and so  $\sum\limits^n_{i=1}l_{ik}e_i\in K$. Then there exists $t_{jk}\in R$ such that $\sum\limits^n_{i=1}l_{ik}e_i=\sum\limits^m_{j=1}t_{jk}(\sum\limits^n_{i=1}r_{ij}e_i)=\sum\limits^n_{i=1}(\sum\limits^m_{j=1}(t_{jk}r_{ij})e_i)$. Since $F$ is free, we have $l_{ik}=\sum\limits^m_{j=1}r_{ij}t_{jk}$. Hence
	$$sa_i=\sum\limits^t_{k=1}l_{ik}d_k=\sum\limits^t_{k=1}(\sum\limits^m_{j=1}r_{ij}t_{jk})d_k=
	\sum\limits^m_{j=1}r_{ij}(\sum\limits^t_{k=1}t_{jk}d_k)$$
	with $\sum\limits^t_{k=1}t_{jk}d_k\in A$. That is, $sa_i=\sum\limits^m_{j=1}r_{ij}x_j$ is solvable in $A$.

	$(2)\Rightarrow(1)$: Let $M$ be a finitely presented $R$-module. Then we have a exact sequence
	$M\otimes_RA\xrightarrow{1\otimes f} M\otimes_RB\rightarrow M\otimes_RC\rightarrow 0$ . We will show that $\Ker(1\otimes f)$ is  $u$-$S$-torsion. Let $\{\sum\limits^{n_\lambda}_{i=1}u^\lambda_i\otimes a^\lambda_i \mid \lambda\in \Lambda\}$ be the generators of $\Ker(1\otimes f)$. Then  $\sum\limits^{n_\lambda}_{i=1}u^\lambda_i\otimes f(a^\lambda_i)=0$ in $M\otimes_RB$ for each $\lambda\in \Lambda$. By \cite[Chapter I, Lemma 6.1]{FS01}, there exists $r^\lambda_{ij}\in R$ and $b^\lambda_j\in B$ such that $f(a^\lambda_i)=\sum\limits^{m_\lambda}_{j=1}r^\lambda_{ij}b^\lambda_j$ and $\sum\limits^{n_\lambda}_{i=1}u^\lambda_ir^\lambda_{ij}=0$ for each $\lambda\in \Lambda$. So $sa^\lambda_i=\sum\limits^{m_\lambda}_{j=1}r^\lambda_{ij}x^\lambda_j$  have a solution, say $a^\lambda_j$ in $A$ by (2). Then $$s(\sum\limits^{n_\lambda}_{i=1}u^\lambda_i\otimes a^\lambda_i)=\sum\limits^{n_\lambda}_{i=1}u^\lambda_i\otimes sa^\lambda_i=\sum\limits^{n_\lambda}_{i=1}u^\lambda_i\otimes (\sum\limits^{m_\lambda}_{j=1}r^\lambda_{ij}a^\lambda_j)=\sum\limits^{m_\lambda}_{j=1}((\sum\limits^{n_\lambda}_{i=1}r^\lambda_{ij}u^\lambda_i)\otimes a^\lambda_j)=0$$ for each $\lambda\in \Lambda$. Hence $s\Ker(1\otimes f)=0$, and $0\rightarrow M\otimes_RA\rightarrow M\otimes_RB\rightarrow M\otimes_RC\rightarrow 0$ is  $u$-$S$-exact.
	
	$(2)\Rightarrow(3)$:  Let  $\{e_1,\dots,e_n\}$ be the basis of the finitely generated free $R$-module $F$. Suppose $K$ is generated by $\{y_i=\sum\limits^{m}_{j=1}r_{ij}e_j\mid i=1,\dots,m\}$. Set $\beta(e_j)=b_j$ and $\alpha(y_i)=a_i$ for each $i$ and $j$. Then $f(a_i)=\sum\limits^{m}_{j=1}r_{ij}b_j$. By $(2)$, we have
	$sa_i=\sum\limits^m_{j=1}r_{ij}d_j$ for some $d_j\in A$. Let $\eta:F\rightarrow A$ be the $R$-homomorphism satisfying $\eta(e_j)=d_j$. Then $\eta i(y_i)=\eta i(\sum\limits^{m}_{j=1}r_{ij}e_j)=\sum\limits^{m}_{j=1}r_{ij}\eta(e_j)=\sum\limits^{m}_{j=1}r_{ij}d_j=sa_i=s\alpha(y_i)$,
	and so we have $s\alpha=\eta i$.
	
	$(3)\Rightarrow(4)$: Let $\gamma\in\Hom_R(N,C)$. Then there are $R$-homomorphisms $\delta:N\rightarrow B$ and $\alpha:K\rightarrow A$ such that the  following  diagram is commutative with rows exact:
	$$\xymatrix@R=20pt@C=30pt{
		0\ar[r] &K\ar[d]_{\alpha}\ar[r]^{i}&F\ar[r]^{\pi} \ar[d]^{\beta}& N\ar[d]^{\gamma}\ar[r]&0\\
		0\ar[r] & A\ar[r]_{f} &B\ar[r]_{f'}  &C\ar[r] & 0\\}$$
Let $s\in S$. Then we have following  diagram is commutative with rows exact:
	$$\xymatrix@R=20pt@C=30pt{
		0\ar[r] &K\ar[d]_{s\alpha}\ar[r]^{i}&\ar@{.>}[ld]^{\eta}F\ar[r]^{\pi} \ar[d]^{s\beta}& N\ar@{.>}[ld]^{\delta}\ar[d]^{s\gamma}\ar[r]&0\\
		0\ar[r] & A\ar[r]_{f} &B\ar[r]_{f'}  &C\ar[r] & 0\\}$$
	By (3), there exists an homomorphism $\eta:F\rightarrow A$ such that $s\alpha=\eta i_K$. It follows by  \cite[Exercise 1.60]{fk16}, there is an $R$-homomorphism $\delta:N\rightarrow B$ such that $s\gamma=f'\delta$. Consequently, one can verify the  $R$-sequence $0\rightarrow \Hom_R(N,A)\rightarrow \Hom_R(N,B)\rightarrow \Hom_R(N,C)\rightarrow 0$ is $u$-$S$-exact  with respect to  $s$.

	$(4)\Rightarrow(2)$:  Suppose that  $f(a_i)=\sum\limits^m_{j=1}r_{ij}b_j\ (i=1,\dots,n)$ with $a_i\in A$, $b_j\in B$ and  $r_{ij}\in R$. Let $F_0$ be a free module with a basis $\{e_1,\dots,e_m\}$ and  $F_1$ a free module with basis $\{e'_1,\dots,e'_n\}$. Then there are $R$-homomorphisms $\tau: F_0\rightarrow B$ and $\sigma: F_1\rightarrow \Im(f)$ satisfying $\tau(e_j)=b_j$ and $\sigma(e'_i)=f(a_i)$ for each $i,j$. Define an $R$-homomorphism $h:F_1\rightarrow F_0$ by $h(e'_i)=\sum\limits^m_{j=1}r_{ij}e_j$ for each $i$. Then $\tau h(e'_i)=\sum\limits^m_{j=1}r_{ij}\tau(e_j)=\sum\limits^m_{j=1}r_{ij}b_j=f(a_i)=\sigma(e'_i)$. Set $N=\Coker(h)$. Then $N$ is finitely presented. Thus there exists a homomorphism $\phi: N\rightarrow \Coker(f)$ such that the following diagram commutative:
	$$\xymatrix@R=20pt@C=30pt{
		&F_1\ar[d]_{\sigma}\ar[r]^{h}&F_0\ar[r]^{g} \ar[d]^{\tau}& N\ar@{.>}[d]^{\phi}\ar[r]&0\\
		0\ar[r] & A\ar[r]_{i} &B\ar[r]_{\pi}  &C\ar[r] & 0\\}$$
	Note that the induced sequence $$\Hom_R(N,B)\rightarrow \Hom_R(N,C)\rightarrow 0$$ is $u$-$S$-exact with respect to $s$ by (4). Hence there exists a homomorphism $\delta: N\rightarrow C$ such that $s\phi=\pi\delta$.
	Consider the following commutative diagram:
	$$\xymatrix@R=20pt@C=30pt{
		&F_1\ar[d]_{s\sigma}\ar[r]^{h}&F_0\ar@{.>}[ld]^{\eta}\ar[r]^{g} \ar[d]^{s\tau}& N\ar[ld]^{\delta}\ar[d]^{s\phi}\ar[r]&0\\
		0\ar[r] & A\ar[r]_{i} &B\ar[r]_{\pi}  &C\ar[r] & 0\\}$$
	We claim that there exists a homomorphism $\eta:F_0\rightarrow A$ such that $\eta f=s\sigma$. Indeed, since $\pi\delta g=s\phi g=\pi \tau$, we have $\Im(s\tau-\delta g)\subseteq \Ker(\pi)=\Im(f)$. Define $\eta:F_0\rightarrow \Im(f)$ to be a homomorphism satisfying $\eta(e_i)=s\tau(e_i)-\delta g(e_i)$ for each $i$. So for each $e'_i\in F_1$, we have $\eta f(e'_i)=s\tau f(e'_i)-\delta g f(e'_i)=s\tau f(e'_i)$. Thus $i(s\sigma)=si\sigma=s\tau f=i\eta f$. Therefore, $\eta f=s\sigma$. Hence $sf(a_i)=s\sigma(e'_i)=\eta f(e'_i)=\eta (\sum\limits^m_{j=1}r_{ij}e_j)=\sum\limits^m_{j=1}r_{ij}\eta (e_j)$ with $\eta (e_j)\in \Im(f)$. So we have $sa_i=st'_1f(a_i) =\sum\limits^m_{j=1}r_{ij}t'_1\eta (e_j)$ with $t'_1\eta (e_j)\in A$ for each $i$.
\end{proof}

Recall from \cite{qwz23} that an $R$-modules $M$ is said to be $S$-flat if   $M_S$ is a flat $R_S$-module.
\begin{definition}
An $R$-modules $F$ is said to be \emph{$s$-flat} if  for any finitely presented $R$-module $M$, there exists $s\in S$ such that $s\Tor_1^R(M,F)=0$.
\end{definition}

Certainly,  every $s$-flat module is $S$-flat. Indeed,  let $N$ be a finitely presented $R_S$-module. Then $N\cong M_S$ for some finitely presented $R$-module $M$. So $\Tor_1^{R_S}(N,F_S)\cong \Tor_1^{R}(M,F)_S=0$ as $s\Tor_1^R(M,F)=0$ for some $s\in S$. However, the converse is not true in general.

\begin{example}
	Let $D$ be a valuation domain whose valuation group is $\mathbb{Z}\times\mathbb{Z}$ with lexicographical order and $v: R\rightarrow G$ is the valuation map. Let $x,y\in R$ with $v(x)=(0,1)$ and $v(y)=(1,0)$. Then for any $i\geq 1$, there exists $b_i\in R$ such that $y=b_ix^i$. Let $S=\{x^i\mid i\geq 0\}.$
	Set $$M=\bigoplus\limits_{i\geq 1}R/x^iR.$$	
	Then $M_S=0$, and so $M$ is an $S$-flat $R$-module. We claim that $M$ is not $s$-flat. Indeed, let $N=R/yR$. Then $N$ is finitely presented. However, $$\Tor_1^R(N,M)\cong \bigoplus\limits_{i\geq 1}\Tor_1^R(R/yR,R/x^iR)\cong \bigoplus\limits_{i\geq 1}yR/yx^iR.$$
	So for any $x^i\in S$, we have $x^i\Tor_1^R(N,M)\not=0$. Hence, $M$ is not $s$-flat.
\end{example}

\begin{lemma}\label{spq-f}
Any $s$-pure quotient module of $s$-flat module  is $s$-flat.
\end{lemma}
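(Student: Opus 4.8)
The plan is to realize the hypothesis as an $s$-pure short exact sequence $0\to A\xrightarrow{f} F\xrightarrow{g} M\to 0$ with $F$ an $s$-flat module, to fix an arbitrary finitely presented $R$-module $N$, and to follow $\Tor_1^R(N,M)$ through the long exact sequence produced by applying $N\otimes_R-$. Concretely, I would first record the segment
$$\Tor_1^R(N,F)\xrightarrow{\ \ }\Tor_1^R(N,M)\xrightarrow{\ \delta\ }N\otimes_RA\xrightarrow{1\otimes f}N\otimes_RF,$$
so that, by exactness, $\Ker\delta$ is a homomorphic image of $\Tor_1^R(N,F)$ and $\Im\delta=\Ker(1\otimes f)$.

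Next I would bound the two ends using the two hypotheses separately. Since $F$ is $s$-flat, there is $s_1\in S$ with $s_1\Tor_1^R(N,F)=0$; as $\Ker\delta$ is a quotient of $\Tor_1^R(N,F)$, this gives $s_1\Ker\delta=0$. Since $0\to A\to F\to M\to 0$ is $s$-pure (Definition \ref{def-s-f}), the sequence $0\to N\otimes_RA\to N\otimes_RF\to N\otimes_RM\to 0$ is $u$-$S$-exact, so there is $s_2\in S$ with $s_2\Ker(1\otimes f)=0$, that is, $s_2\Im\delta=0$.

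Finally I would combine these bounds: for any $x\in\Tor_1^R(N,M)$ one has $\delta(s_2x)=s_2\delta(x)\in s_2\Im\delta=0$, hence $s_2x\in\Ker\delta$, and therefore $s_1s_2x=s_1(s_2x)=0$. Thus $s_1s_2\,\Tor_1^R(N,M)=0$ with $s_1s_2\in S$, and since $N$ was an arbitrary finitely presented $R$-module this shows $M$ is $s$-flat. I do not expect a genuine obstacle here; the only points needing care are that in both Definition \ref{def-s-f} and the definition of $s$-flatness the witnessing element of $S$ is permitted to depend on the test module, so $N$ must be kept fixed throughout and the element $s_1s_2$ is produced at the end attached to that $N$, and that only exactness of the $\Tor$-sequence at $\Tor_1^R(N,M)$ and at $N\otimes_RA$ is used, not any stronger flatness of $F$.
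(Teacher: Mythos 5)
Your proposal is correct and is essentially the paper's own argument: the paper also applies $N\otimes_R-$ to the $s$-pure sequence, cites the induced exact sequence $\Tor_1^R(N,F)\to\Tor_1^R(N,M)\to N\otimes_RA\to N\otimes_RF\to N\otimes_RM\to 0$, and leaves the two-step annihilation (one element of $S$ from $s$-flatness of $F$, one from $s$-purity) implicit. You have simply written out the details the paper omits, and your care about keeping the test module $N$ fixed is exactly the right point to watch.
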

\begin{proof}
	Let $0\rightarrow A\rightarrow B\rightarrow C\rightarrow 0$ be an  $s$-pure exact sequence with $B$ $s$-flat. Let $M$ be a finitely presented $R$-module. Then the result follows by the induced exact sequence $$\Tor_1^R(M,B)\rightarrow\Tor_1^R(M,C)\rightarrow M\otimes_RA\rightarrow M\otimes_RB\rightarrow M\otimes_RC\rightarrow0.$$
\end{proof}

\begin{proposition} An $R$-module $F$ is $s$-flat if and only if any exact sequence $0\rightarrow A\rightarrow B\rightarrow F\rightarrow 0$ is  $s$-pure.
\end{proposition}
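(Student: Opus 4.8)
The plan is to prove both implications directly from the definitions, using the long exact sequence in $\Tor$ as the main tool.

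\medskip

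For the forward direction, suppose $F$ is $s$-flat and let $0\rightarrow A\rightarrow B\rightarrow F\rightarrow 0$ be exact. Let $M$ be a finitely presented $R$-module. Applying $M\otimes_R-$ yields the long exact sequence
$$\Tor_1^R(M,B)\rightarrow \Tor_1^R(M,F)\rightarrow M\otimes_R A\rightarrow M\otimes_R B\rightarrow M\otimes_R F\rightarrow 0.$$
Since $F$ is $s$-flat, there is $s\in S$ with $s\Tor_1^R(M,F)=0$, which forces $s\Ker(M\otimes_R A\rightarrow M\otimes_R B)=0$, because that kernel is the image of $\Tor_1^R(M,F)$. Hence the sequence $0\rightarrow M\otimes_R A\rightarrow M\otimes_R B\rightarrow M\otimes_R F\rightarrow 0$ is $u$-$S$-exact, and so the original sequence is $s$-pure by Definition \ref{def-s-f}.

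\medskip

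For the converse, assume every exact sequence $0\rightarrow A\rightarrow B\rightarrow F\rightarrow 0$ is $s$-pure; it suffices to exhibit one such sequence with $B$ flat (hence $s$-flat). Choose a short exact sequence $0\rightarrow A\rightarrow P\rightarrow F\rightarrow 0$ with $P$ projective, in particular flat. By hypothesis this sequence is $s$-pure, so for any finitely presented $R$-module $M$ the sequence $0\rightarrow M\otimes_R A\rightarrow M\otimes_R P\rightarrow M\otimes_R F\rightarrow 0$ is $u$-$S$-exact, i.e. there is $s\in S$ with $s\Ker(M\otimes_R A\rightarrow M\otimes_R P)=0$. Since $P$ is flat, $\Tor_1^R(M,P)=0$, and the long exact sequence identifies $\Tor_1^R(M,F)$ with $\Ker(M\otimes_R A\rightarrow M\otimes_R P)$; therefore $s\Tor_1^R(M,F)=0$, proving $F$ is $s$-flat. (One may instead invoke Lemma \ref{spq-f}: $F$ is an $s$-pure quotient of the $s$-flat module $P$.)

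\medskip

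No step presents a real obstacle here; the only point requiring minor care is the bookkeeping of which map in the long exact sequence has the relevant kernel or image, and the observation that $\Tor_1^R(M,P)=0$ when $P$ is projective so that $\Tor_1^R(M,F)$ is identified with a submodule of $M\otimes_R A$. This is entirely routine given the long exact sequence of $\Tor$ and the definitions already in place.
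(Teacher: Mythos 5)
Your proof is correct and follows essentially the same route as the paper: the forward direction uses the long exact sequence of $\Tor$ together with $s\Tor_1^R(M,F)=0$, and the converse takes a projective presentation $0\rightarrow A\rightarrow P\rightarrow F\rightarrow 0$ and identifies $\Tor_1^R(M,F)$ with the kernel of $M\otimes_RA\rightarrow M\otimes_RP$. Your extra bookkeeping of which kernel is killed by $s$ only makes explicit what the paper leaves implicit.
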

\begin{proof}  Let $M$ be a finitely presented $R$-module. Suppose $F$ is $s$-flat.  Then $\Tor_1^R(M,F)$ is uniformly $S$-torsion. The result follows by the induced exact sequence $$\Tor_1^R(M,F)\rightarrow M\otimes_RA\rightarrow M\otimes_RB\rightarrow M\otimes_RF\rightarrow0.$$
	
On the other hand,	let $0\rightarrow A\rightarrow P\rightarrow F\rightarrow0$ be an exact sequence with $P$ a projective $R$-module. Then the result follows by $$0\rightarrow\Tor_1^R(M,F)\rightarrow M\otimes_RA\rightarrow M\otimes_RP\rightarrow M\otimes_RF\rightarrow0.$$
\end{proof}

\section{$s$-absolutely pure modules}
Recall  thatAn $R$-module $M$ is said to be absolutely pure provided that $\Ext_R^1(N,M)=0$  for any finitely presented $R$-module $N$.
\begin{definition}\label{def-s-f} Let $R$ be a ring, $S$ a multiplicative subset of $R$.
 An $R$-module $M$ is said to be $s$-absolutely pure provided that for any finitely presented $R$-module $N$, there is $s\in S$ such that $s\Ext_R^1(N,M)=0$.
\end{definition}

Recall from  \cite[Definition 2.1]{zqcspsss} that a short $u$-$S$-exact sequence  $0\rightarrow A\xrightarrow{f} B\xrightarrow{g} C\rightarrow 0$ is said to be $u$-$S$-split provided that there are  $s\in S$ and an  $R$-homomorphism $f':B\rightarrow A$ such that $f'f(a)=sa$ for any $a\in A$, that is, $f'f=s\Id_A$. Next, we will give a characterization of $s$-absolutely pure modules.
\begin{theorem}\label{c-s-abp}
	Let $R$ be a ring, $S$ a multiplicative subset of $R$ and $E$ an $R$-module.
	Then the following statements are equivalent:
	\begin{enumerate}
		\item $E$ is $s$-absolutely pure;
		\item  any short exact sequence $0\rightarrow E\rightarrow B\rightarrow C\rightarrow 0$ beginning with $E$ is $s$-pure;
		\item  $E$ is a $s$-pure submodule in every injective module containing $E$;
		\item  $E$ is a $s$-pure submodule in its injective envelope;
		\item  if  $P$ is finitely generated projective, $K$ is a finitely generated submodule of $P$ and  $f:K\rightarrow E$ is an  $R$-homomorphism, then there is an $R$-homomorphism $g:P\rightarrow E$ such that $sf=gi$ for some $s\in S$ with $i:K\rightarrow P$  the natural embedding map.
	\end{enumerate}
\end{theorem}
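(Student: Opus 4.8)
The plan is to establish the cycle $(1)\Rightarrow(5)\Rightarrow(2)\Rightarrow(3)\Rightarrow(4)\Rightarrow(1)$, using the characterizations of $s$-pure exact sequences from Theorem \ref{c-s-pure} (together with the existence of injective envelopes) at nearly every step. Two of the implications are essentially formal: for $(2)\Rightarrow(3)$, if $E$ is a submodule of an injective module $I$, then $0\to E\to I\to I/E\to 0$ is a short exact sequence beginning with $E$, hence $s$-pure by $(2)$, which is exactly the statement that $E$ is an $s$-pure submodule of $I$; and $(3)\Rightarrow(4)$ is the special case $I=E(E)$.

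For $(1)\Rightarrow(5)$, suppose $P$ is finitely generated projective, $K\subseteq P$ a finitely generated submodule and $f\colon K\to E$ a homomorphism. I would embed $P$ as a direct summand of a finitely generated free module $F=P\oplus P'$; then $K$ is also a finitely generated submodule of $F$, the module $N:=F/K$ is finitely presented, and applying $\Hom_R(-,E)$ to $0\to K\xrightarrow{i}F\to N\to 0$ together with $\Ext_R^1(F,E)=0$ yields $\Ext_R^1(N,E)\cong\coker\bigl(\Hom_R(F,E)\xrightarrow{i^{\ast}}\Hom_R(K,E)\bigr)$. By $(1)$ there is $s\in S$ with $s\Ext_R^1(N,E)=0$, so $sf\in\Im(i^{\ast})$, say $sf=hi$ for some $h\colon F\to E$; restricting $h$ to $P$ gives the required $g\colon P\to E$ with $sf=g i$ along the inclusion $K\hookrightarrow P$. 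The feature making this work is that the $s$ produced by $(1)$ depends only on $N$, hence only on $P$ and $K$, not on $f$; this is also why I would avoid a direct attack on $(5)\Rightarrow(1)$, since the $s$ in $(5)$ is permitted to depend on $f$ while $\Ext_R^1(N,E)$ need not be finitely generated, so one cannot in general assemble a single $s$ out of the per-$f$ ones.

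For $(5)\Rightarrow(2)$, let $0\to E\to B\to C\to 0$ be exact. By the equivalence of conditions $(1)$ and $(3)$ in Theorem \ref{c-s-pure}, it suffices to verify condition $(3)$ there for this sequence: given a finitely generated free module $F$, a finitely generated submodule $K\subseteq F$ and $\alpha\colon K\to E$, one must produce $\eta\colon F\to E$ and $s\in S$ with $s\alpha=\eta i$. But $(5)$, applied with the finitely generated projective module $P:=F$, delivers exactly such an $\eta$ and $s$, with no use of the auxiliary map $F\to B$. Finally, for $(4)\Rightarrow(1)$: condition $(4)$ says $0\to E\to E(E)\to E(E)/E\to 0$ is $s$-pure, so by the equivalence of $(1)$ and $(4)$ in Theorem \ref{c-s-pure}, for every finitely presented $N$ the sequence $0\to\Hom_R(N,E)\to\Hom_R(N,E(E))\to\Hom_R(N,E(E)/E)\to 0$ is $u$-$S$-exact with respect to some $s\in S$. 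Since $\Hom_R(N,-)$ is left exact, the only defect of exactness sits at the right-hand term, and the long exact $\Ext$ sequence identifies its cokernel with $\ker\bigl(\Ext_R^1(N,E)\to\Ext_R^1(N,E(E))\bigr)$, which equals $\Ext_R^1(N,E)$ because $E(E)$ is injective; hence $s\Ext_R^1(N,E)=0$, giving $(1)$.

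The main obstacle is not a hard computation but the bookkeeping of which data the chosen $s$ may depend on at each stage; the cycle above is set up so that $s$ is always extracted from a statement in which the needed uniformity is already present — either the $S$-torsion vanishing of a single group $\Ext_R^1(N,E)$, or the lifting property for one fixed diagram — after which every remaining verification is a routine diagram chase.
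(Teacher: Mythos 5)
Your proof is correct, and it differs from the paper's in one substantive way. The paper proves $(2)\Rightarrow(3)\Rightarrow(4)\Rightarrow(1)\Rightarrow(2)$ and then closes the loop to $(5)$ by the two separate implications $(1)\Rightarrow(5)$ and $(5)\Rightarrow(1)$; its $(1)\Rightarrow(5)$ is essentially your argument (the paper works directly with $0\to K\to P\to P/K\to 0$ and $\Ext^1_R(P,E)=0$ rather than first embedding $P$ in a free module, but the content is the same: the $s$ killing $\Ext_R^1(P/K,E)$ works for every $f$ at once). Where you genuinely diverge is in replacing the paper's direct $(5)\Rightarrow(1)$ by $(5)\Rightarrow(2)$ via condition $(3)$ of Theorem \ref{c-s-pure}, and your stated reason for doing so is well taken: the paper's $(5)\Rightarrow(1)$ asserts ``by $(5)$ we have $s\Hom_R(K,E)\subseteq\Im(i_{\ast})$'' for a single $s$, whereas $(5)$ as literally quantified only yields, for each individual $f\in\Hom_R(K,E)$, some $s_f$ with $s_f f\in\Im(i_{\ast})$; this shows each element of $\Ext_R^1(N,E)=\coker(i_{\ast})$ is killed by some element of $S$, which is weaker than uniform $S$-torsion unless the $s$ in $(5)$ is read as depending only on $P$ and $K$. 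Your reroute through $(5)\Rightarrow(2)\Rightarrow(3)\Rightarrow(4)\Rightarrow(1)$ sidesteps this entirely, because in verifying condition $(3)$ of Theorem \ref{c-s-pure} the element $s$ is only ever required for one fixed map $\alpha$ at a time; the price is that the chain $(5)\Rightarrow(1)$ now passes through the injective envelope and the full strength of Theorem \ref{c-s-pure}, but every link is sound. The remaining implications ($(2)\Rightarrow(3)\Rightarrow(4)$ formal, $(4)\Rightarrow(1)$ via the $u$-$S$-exact $\Hom$ sequence into the injective envelope and the vanishing of $\Ext^1_R(N,E(E))$) match the paper's.
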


\begin{proof} $ (2)\Rightarrow (3)\Rightarrow (4)$: Trivial.
	
	$(4)\Rightarrow(1)$: Let $I$ be the injective envelope of $E$. Then we have an $s$-pure exact sequence $0\rightarrow E\rightarrow I\rightarrow L\rightarrow 0$ by (4). Then, by  Theorem \ref{c-s-pure}, there is an element $s\in S$ such that $0\rightarrow\Hom_R(N,E)\rightarrow \Hom_R(N,I)\rightarrow \Hom_R(N,L)\rightarrow 0$  is $u$-$S$-exact with respect to $s$  for any finitely presented $R$-module $N$. Since  $0\rightarrow\Hom_R(N,E)\rightarrow \Hom_R(N,I)\rightarrow \Hom_R(N,L)\rightarrow \Ext_R^1(N,E)\rightarrow 0$ is exact. Hence $\Ext_R^1(N,E)$ is  $u$-$S$-torsion with respect to $s$ for any finitely presented $R$-module $N$.
	
	$(1)\Rightarrow(2)$:  Let  $N$ be a finitely presented $R$-module and $0\rightarrow E\rightarrow B\rightarrow C\rightarrow 0$ an exact sequence. Then there is an exact sequence $0\rightarrow\Hom_R(N,E)\rightarrow \Hom_R(N,B)\rightarrow \Hom_R(N,C)\rightarrow \Ext_R^1(N,E)$  for any finitely presented $R$-module $N$. By (1), $0\rightarrow\Hom_R(N,E)\rightarrow \Hom_R(N,B)\rightarrow \Hom_R(N,C)\rightarrow  0$  is $u$-$S$-exact  for any finitely presented $R$-module $N$.
	Hence $0\rightarrow E\rightarrow B\rightarrow C\rightarrow 0$ is $s$-pure by  Theorem \ref{c-s-pure}.
	
	$(1)\Rightarrow(5)$:   Considering the exact sequence $0\rightarrow K\xrightarrow{i} P\rightarrow P/K\rightarrow 0$, we have the following exact sequence $ \Hom_R(P,E)\xrightarrow{i_{\ast}} \Hom_R(K,E)\rightarrow \Ext_R^1(P/K,E)\rightarrow 0$. Since $P/K$ is finitely presented, $\Ext_R^1(P/K,E)$ is $u$-$S$-torsion with respect to some $s\in S$ by $(1)$. Hence $i_{\ast}$ is a $u$-$S$-epimorphism, and so $s\Hom_R(K,E)\subseteq \Im(i_{\ast})$. Let $f:K\rightarrow E$ be an $R$-homomorphism. Then there is an $R$-homomorphism $g:P\rightarrow E$ such that $sf=gi$.
	
	$(5)\Rightarrow(1)$:  Let $N$ be a finitely presented $R$-module.  Then we have an exact sequence $0\rightarrow K\xrightarrow{i} P\rightarrow N\rightarrow 0$ where $P$ is finitely generated projective and $K$ is finitely generated. Consider the following exact sequence $ \Hom_R(P,E)\xrightarrow{i_{\ast}} \Hom_R(K,E)\rightarrow \Ext_R^1(N,E)\rightarrow 0$.  By $(5)$, we have  $s\Hom_R(K,E)\subseteq \Im(i_{\ast})$. Hence $\Ext_R^1(N,E)$ is  $u$-$S$-torsion with respect to $s$.
\end{proof}

\begin{lemma}\label{inj-cog}\cite[lemma 4.2]{zuscoh}
	Let $E$ be an injective cogenerator. Then the following statements are equivalent:
	\begin{enumerate}
		\item $T$ is  uniformly $S$-torsion with respect to $s$;
		\item   $\Hom_R(T,E)$  is  uniformly $S$-torsion with respect to $s$.
	\end{enumerate}
\end{lemma}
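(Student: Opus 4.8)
The plan is to prove both implications by exploiting the exactness of $\Hom_R(-,E)$ together with the defining property of a cogenerator: for any $R$-module $X$, one has $X = 0$ if and only if $\Hom_R(X,E) = 0$, and more generally a map $X \to Y$ is zero if and only if the dual $\Hom_R(Y,E) \to \Hom_R(X,E)$ is zero, since $E$ cogenerates and is injective.

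For $(1)\Rightarrow(2)$: suppose $sT = 0$ for some $s\in S$, i.e. the multiplication map $\mu_s\colon T\to T$ is the zero map. Applying the exact functor $\Hom_R(-,E)$, the induced map $\Hom_R(T,E)\to\Hom_R(T,E)$ is again multiplication by $s$ (functoriality of $\Hom$ in the first variable sends $\mu_s$ to $\mu_s$), and it is zero. Hence $s\Hom_R(T,E) = 0$, so $\Hom_R(T,E)$ is uniformly $S$-torsion with respect to the same $s$. This direction does not even need the cogenerator hypothesis, only additivity of $\Hom$.

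For $(2)\Rightarrow(1)$: suppose $s\Hom_R(T,E) = 0$, i.e. the multiplication map $\mu_s$ on $\Hom_R(T,E)$ is zero. But as above this map is exactly $\Hom_R(\mu_s, E)$, where $\mu_s\colon T\to T$ is multiplication by $s$ on $T$. So $\Hom_R(\mu_s,E) = 0$. Now invoke the cogenerator property: since $E$ is an injective cogenerator and a map $g\colon X\to Y$ satisfies $g = 0$ whenever $\Hom_R(g,E) = 0$ (if $g\neq 0$, pick $x$ with $g(x)\neq 0$, embed the cyclic module $Rg(x)\hookrightarrow E$, extend to $Y\to E$ by injectivity, and compose with $g$ to get a nonzero element of the image), we conclude $\mu_s = 0$ on $T$, i.e. $sT = 0$. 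Hence $T$ is uniformly $S$-torsion with respect to $s$.

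The only real subtlety is the standard fact that $\Hom_R(-,E)$ is a faithful functor on morphisms when $E$ is an injective cogenerator — that $\Hom_R(g,E)=0$ forces $g=0$ — which I would state and use as the key step; it is routine but worth spelling out since it is precisely where the cogenerator hypothesis is consumed. Everything else is bookkeeping with the multiplication-by-$s$ endomorphism. Since the lemma is cited from \cite{zuscoh}, in the paper it may simply be referenced, but the argument above is the natural self-contained proof.
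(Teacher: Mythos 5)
Your proof is correct. The paper itself gives no argument for this lemma --- it is simply quoted from \cite[Lemma 4.2]{zuscoh} --- so there is nothing to compare against; your two implications (functoriality of $\Hom_R(-,E)$ sending multiplication by $s$ to multiplication by $s$ for $(1)\Rightarrow(2)$, and faithfulness of $\Hom_R(-,E)$ on morphisms for an injective cogenerator for $(2)\Rightarrow(1)$) constitute the standard self-contained proof, and you correctly keep the same $s$ throughout, which is what ``with respect to $s$'' demands. One small wording issue: the cyclic module $Rg(x)$ need not literally embed in $E$; what the cogenerator property gives is a homomorphism $Rg(x)\to E$ that is nonzero on the generator $g(x)$ (obtained from the embedding of $Y$ into a product of copies of $E$ followed by a suitable projection), which you then extend to $Y\to E$ by injectivity --- the conclusion $h\circ g\neq 0$ is unaffected.
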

\begin{proposition}\label{flat-FP-injective}
	Let $R$ be a ring and $S$ be a multiplicative subset of $R$. Then the following statements are equivalent:
	\begin{enumerate}
		\item $F$ is $s$-flat;
		\item   $\Hom_R(F,E)$ is $s$-absolutely pure for any injective module $E$;
		\item  if $E$ is an injective cogenerator, then $\Hom_R(F,E)$ is $s$-absolutely pure.
	\end{enumerate}
\end{proposition}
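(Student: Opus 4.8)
The plan is to run everything through the standard natural isomorphism
\[
\Ext_R^1(N,\Hom_R(F,E))\cong\Hom_R(\Tor_1^R(N,F),E),
\]
valid for every $R$-module $N$, every $R$-module $F$ and every \emph{injective} $R$-module $E$. One gets it by choosing a projective resolution $P_\bullet\to N$, using that $\Hom_R(-,E)$ is exact (so it carries homology to cohomology, $\Hom_R(H_\bullet(-),E)\cong H^\bullet(\Hom_R(-,E))$) together with the tensor--hom adjunction $\Hom_R(P_\bullet\otimes_RF,E)\cong\Hom_R(P_\bullet,\Hom_R(F,E))$, and reading off degree one. This is the only ingredient that is not pure bookkeeping, and it is entirely classical; I expect to quote it (or dispatch it in one line) rather than reprove it. I would also record that multiplication by a fixed $s\in S$ is transported through this isomorphism, being induced on both sides by the natural transformation $s\cdot\Id$.

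Granting this, $(1)\Rightarrow(2)$ is immediate: if $F$ is $s$-flat and $N$ is finitely presented, choose $s\in S$ with $s\Tor_1^R(N,F)=0$; then $s$ annihilates $\Hom_R(\Tor_1^R(N,F),E)\cong\Ext_R^1(N,\Hom_R(F,E))$ for every injective $E$, so $\Hom_R(F,E)$ is $s$-absolutely pure. The implication $(2)\Rightarrow(3)$ is trivial, since an injective cogenerator is in particular injective.

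For $(3)\Rightarrow(1)$ I would fix an injective cogenerator $E$ of the category of $R$-modules (for instance $\Hom_{\mathbb Z}(R,\mathbb Q/\mathbb Z)$). Let $N$ be finitely presented. By hypothesis there is $s\in S$ with $s\Ext_R^1(N,\Hom_R(F,E))=0$, so by the displayed isomorphism $\Hom_R(\Tor_1^R(N,F),E)$ is uniformly $S$-torsion with respect to $s$. Applying Lemma \ref{inj-cog} to the $R$-module $T=\Tor_1^R(N,F)$ (legitimate since $E$ is an injective cogenerator) forces $T$ itself to be uniformly $S$-torsion with respect to $s$, i.e.\ $s\Tor_1^R(N,F)=0$. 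Since $N$ was an arbitrary finitely presented module, this is exactly the statement that $F$ is $s$-flat, which closes the loop.

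The only point needing care is the compatibility of the single chosen witness $s\in S$ with the isomorphism above and with Lemma \ref{inj-cog}; once that is checked, the three implications are formal manipulations of the definitions of $s$-flat and $s$-absolutely pure modules, so I anticipate no serious obstacle.
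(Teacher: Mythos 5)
Your proposal is correct and follows essentially the same route as the paper: both hinge on the natural isomorphism $\Ext_R^1(N,\Hom_R(F,E))\cong\Hom_R(\Tor_1^R(N,F),E)$ for injective $E$, with $(3)\Rightarrow(1)$ obtained via Lemma \ref{inj-cog} applied to an injective cogenerator. The only cosmetic difference is that for $(1)\Rightarrow(2)$ you observe directly that $s$ annihilating $\Tor_1^R(N,F)$ annihilates its dual, where the paper cites an external lemma; the substance is identical.
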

\begin{proof}

	$(1)\Rightarrow (2)$: Let $M$ be a finitely presented $R$-module and $E$ be an injective $R$-module. Since $F$ is $s$-flat,   $\Tor_1^R(M,F)$  is uniformly $S$-torsion. Thus $\Ext_R^1(M,\Hom_R(F,E))\cong\Hom_R(\Tor_1^R(M,F),E)$ is also uniformly $S$-torsion by \cite[Lemma 4.2]{QKWCZ21}.  Thus $\Hom_R(F,E)$ is $s$-absolutely pure.
	
	$(2)\Rightarrow (3)$: Trivial.

	$(3)\Rightarrow (1)$:  Let  $E$ be an injective cogenerator. Since $\Hom_R(F,E)$ is $s$-absolutely pure, for any finitely presented $R$-module $M$ there exists  $s\in S$ such that  $$\Hom_R(\Tor_1^R(M,F),E)\cong \Ext_R^1(M,\Hom_R(F,E))$$ is  uniformly $S$-torsion with respect to $s$. Since $E$ is an injective cogenerator, $\Tor_1^R(M,F)$ is uniformly $S$-torsion with respect to $s$  by Lemma \ref{inj-cog}.
\end{proof}

\begin{proposition}\label{spsub} Any $s$-pure submodule of an $s$-absolutely pure module is $s$-absolutely pure.
\end{proposition}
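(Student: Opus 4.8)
The plan is to verify the defining $\Ext$ condition directly. Fix a finitely presented $R$-module $N$, and let $0\rightarrow A\rightarrow E\rightarrow E/A\rightarrow 0$ be an $s$-pure exact sequence with $E$ an $s$-absolutely pure module. First I would apply the implication $(1)\Rightarrow(4)$ of Theorem \ref{c-s-pure} to this $s$-pure sequence: there is an element $s\in S$ such that $0\rightarrow \Hom_R(N,A)\rightarrow \Hom_R(N,E)\rightarrow \Hom_R(N,E/A)\rightarrow 0$ is $u$-$S$-exact with respect to $s$. In particular $\Hom_R(N,E)\rightarrow \Hom_R(N,E/A)$ is a $u$-$S$-epimorphism, so $s\cdot\Coker\!\big(\Hom_R(N,E)\rightarrow \Hom_R(N,E/A)\big)=0$.

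Next I would bring in the long exact sequence
$$\Hom_R(N,E)\longrightarrow \Hom_R(N,E/A)\xrightarrow{\,\partial\,}\Ext_R^1(N,A)\longrightarrow \Ext_R^1(N,E)$$
obtained by applying $\Hom_R(N,-)$ to the short exact sequence. The key observation is that $\Im(\partial)\cong\Coker\!\big(\Hom_R(N,E)\rightarrow \Hom_R(N,E/A)\big)$, so $s\cdot\Im(\partial)=0$; and since $\Im(\partial)=\Ker\!\big(\Ext_R^1(N,A)\rightarrow \Ext_R^1(N,E)\big)$ by exactness, this kernel is annihilated by $s$. On the other hand $E$ is $s$-absolutely pure, so there is $t\in S$ with $t\,\Ext_R^1(N,E)=0$, whence the image of $\Ext_R^1(N,A)$ in $\Ext_R^1(N,E)$ is killed by $t$. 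A one-line diagram chase then finishes it: given $z\in\Ext_R^1(N,A)$, the element $tz$ maps to $0$ in $\Ext_R^1(N,E)$, so $tz\in\Ker\!\big(\Ext_R^1(N,A)\rightarrow \Ext_R^1(N,E)\big)$ and hence $s(tz)=0$. Thus $st\,\Ext_R^1(N,A)=0$ with $st\in S$, and as $N$ was an arbitrary finitely presented module, $A$ is $s$-absolutely pure.

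I do not expect a genuine obstacle here; the only point deserving a moment's care is that $s$ and $t$ depend on $N$, which is harmless because the definition of an $s$-absolutely pure module already permits the annihilating element of $S$ to vary with the chosen finitely presented module. For completeness I mention a more conceptual alternative that avoids $\Ext$: a short tensor-kernel computation shows that $s$-pure monomorphisms compose, so composing $A\hookrightarrow E$ with the $s$-pure embedding of $E$ into its injective envelope $I$ (provided by Theorem \ref{c-s-abp}, implication $(1)\Rightarrow(4)$) exhibits $A$ as an $s$-pure submodule of the injective module $I$; since the injective envelope of $A$ is a direct summand of $I$, restricting along that summand shows $A$ is an $s$-pure submodule of its own injective envelope, and then Theorem \ref{c-s-abp}, implication $(4)\Rightarrow(1)$, applies. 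I would nonetheless present the $\Ext$ argument as the main proof, as it is entirely self-contained.
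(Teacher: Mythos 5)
Your proof is correct, but it follows a genuinely different route from the paper's. You verify the definition head-on: Theorem \ref{c-s-pure}$(1)\Rightarrow(4)$ gives $s$ with $s\cdot\Coker\big(\Hom_R(N,E)\to\Hom_R(N,E/A)\big)=0$, the connecting homomorphism identifies that cokernel with $\Ker\big(\Ext_R^1(N,A)\to\Ext_R^1(N,E)\big)$, and combining with $t\,\Ext_R^1(N,E)=0$ yields $st\,\Ext_R^1(N,A)=0$; all steps check out, and your remark that $s,t$ may depend on $N$ is exactly the right point to flag. The paper instead stays entirely at the level of the lifting characterizations: given $\alpha:K\to M_1$ with $K$ a finitely generated submodule of a finitely generated free module $F$, it first uses Theorem \ref{c-s-abp}(5) (applied to the $s$-absolutely pure overmodule $M_2$) to extend $s_1\alpha$ composed with the inclusion to a map $\beta:F\to M_2$, then uses Theorem \ref{c-s-pure}(3) (applied to the $s$-pure embedding $M_1\hookrightarrow M_2$) to replace $\beta$ by a map $\eta:F\to M_1$ with $s_1s_2\alpha=\eta i$, and concludes by $(5)\Rightarrow(1)$ of Theorem \ref{c-s-abp}. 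Both arguments are two-step and accumulate a product of two elements of $S$; yours is the cleaner homological formulation and is self-contained modulo the long exact sequence, while the paper's diagram-lifting version matches the style in which Proposition \ref{spsub} is later used (e.g.\ inside the proof of Theorem \ref{s-d-d-non}, where the factorization $s_2f_Ai_{K'}=g_Ai_Li_{K'}$ is exactly the condition (5) output). Your sketched alternative via composing $s$-pure monomorphisms into an injective and passing to the direct summand $E(A)$ is also sound and lands on $(4)\Rightarrow(1)$ of Theorem \ref{c-s-abp}, so either presentation would serve.
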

\begin{proof} Let $M_2$ be an $s$-absolutely pure module and $M_1$ be an $s$-pure submodule of $M_2.$ Let $K$ be a finitely generated submodule of a free module $F$. Let $\alpha:K\rightarrow M_1$ be an $R$-homomorphism. Consider the following commutative diagram
		$$\xymatrix@R=20pt@C=25pt{
			0\ar[r] &K\ar[d]_{s_1\alpha}\ar[r]^{i}&F\ar@{.>}[ld]^{\eta}\ar@{.>}[d]^{\beta}\\
			0\ar[r] & M_1\ar[r]_{f} &M_2\\}$$
Since $M_2$ is $s$-absolutely pure, there is $s_1\in S$ and an $R$-homomorphism $\beta:F\rightarrow M_2 $ such that $s_1f\alpha=\beta i$ by Theorem \ref{c-s-abp}. Since $f$ is $s$-pure ,  there is $s_2\in S$ and an $R$-homomorphism $\eta:F\rightarrow M_1 $ such that $s_1s_2\alpha=\eta i$ by Theorem \ref{c-s-pure}. Hence $M_1$ is $s$-absolutely pure by Theorem \ref{c-s-abp}.
\end{proof}

\begin{remark} The author in \cite{bb24} introduced the notion of $S$-FP-injective modules, i.e., $R$-modules $M$ satisfying $\Ext^1_R(C_S, M) = 0$ for every finitely presented $R$-module $C.$	Trivially,  $S$-FP-injective modules is closed under direct products.
\end{remark}

However, the following example shows that the class of $s$-absolutely pure modules are not closed under direct products and direct sums. So $s$-absolutely pure modules are different with $S$-FP-injective modules.
\begin{example}
	Let $\mathbb{Z}$ be the ring of integers, $p$ a prime in $\mathbb{Z}$ and  $S=\{p^n\mid n\geq 0\}$.  Set  $E_n=\mathbb{Z}/\langle p^n\rangle$. Then $E_n$ is an $s$-absolutely pure $\mathbb{Z}$-module.  Set $E=\prod\limits_{n=1}^{\infty}E_n$. Let $q\not=p$ be another prime  in $\mathbb{Z}$. Then $\Ext^1_{\mathbb{Z}}(\mathbb{Z}/\langle q\rangle,E)\cong E/qE$  by \cite[P.267, property (G)]{FS15}. So $p^n\Ext^1_{\mathbb{Z}}(\mathbb{Z}/\langle q\rangle,E)\cong (p^nE+qE)/qE\not=0$ for any $n\geq 0$. Thus $E$ is not   $s$-absolutely pure. Setting $E'=\bigoplus\limits_{n=1}^{\infty}E_n$, one can similarly shows that  $E'$ is also not   $s$-absolutely pure.
\end{example}

Recall from \cite{zusapm} that an $R$-module $M$ is said to be uniformly $S$-absolutely pure provided that there is $s\in S$ such that for any finitely presented $R$-module $N$ we always have $s\Ext_R^1(N,M)=0$.

Recall that a multiplicative subset $S$ of $R$ is said to satisfy the maximal multiple condition if there exists a $t\in S$ such that $s|t$ for each $s\in S.$

Trivially, every uniformly $S$-absolutely pure  is $s$-absolutely pure. The converse is true when  $S$  satisfies the maximal multiple condition.

\begin{proposition} Suppose $S$ is a multiplicative subset of a ring $R$ satisfying the maximal multiple condition. Then every $s$-absolutely pure module is uniformly $S$-absolutely pure.
\end{proposition}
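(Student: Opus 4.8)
The plan is to use the element witnessing the maximal multiple condition as a single uniform bound. Let $t\in S$ be such that $s\mid t$ for every $s\in S$; such a $t$ exists by hypothesis. I claim that $t$ witnesses uniform $S$-absolute purity of $M$, that is, $t\,\Ext_R^1(N,M)=0$ for every finitely presented $R$-module $N$.

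Fix a finitely presented $R$-module $N$. Since $M$ is $s$-absolutely pure, there is some $s\in S$ (a priori depending on $N$) with $s\,\Ext_R^1(N,M)=0$. By the maximal multiple condition, $s\mid t$, so write $t=rs$ for some $r\in R$. Then for every $x\in\Ext_R^1(N,M)$ we have $tx=r(sx)=r\cdot 0=0$, using that $\Ext_R^1(N,M)$ is an $R$-module and $R$ is commutative. Hence $t\,\Ext_R^1(N,M)=0$.

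Since $N$ was arbitrary, the single element $t\in S$ satisfies $t\,\Ext_R^1(N,M)=0$ for all finitely presented $N$, which is exactly the definition of $M$ being uniformly $S$-absolutely pure. There is no real obstacle here: the argument is a direct divisibility manipulation, and the only point to keep in mind is that the $s$ produced by $s$-absolute purity genuinely depends on $N$, so it cannot serve as the uniform bound by itself — the maximal multiple $t$ is what absorbs all of these choices simultaneously.
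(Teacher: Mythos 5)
Your proof is correct and follows exactly the same route as the paper: take the maximal multiple $t\in S$, note that for each finitely presented $N$ the element $s$ with $s\,\Ext_R^1(N,M)=0$ divides $t$, and conclude $t\,\Ext_R^1(N,M)=0$ uniformly. Your write-up is in fact slightly more careful than the paper's, since it spells out the factorization $t=rs$ and emphasizes that $s$ depends on $N$.
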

\begin{proof} Let $M$  be an $s$-absolutely pure module. Then for any finitely presented $R$-module $N$, there exists $s_N\in S$ such that $s_N\Ext_R^1(N,M)=0$. Since $S$  satisfies the maximal multiple condition, there exists a $t\in S$ such that $s|t$ for each $s\in S.$  So $t\Ext_R^1(N,M)=0$ for each finitely presented $R$-module $N$. Hence, $N$ is uniformly $S$-absolutely pure.
\end{proof}

However, every $s$-absolutely pure module is not uniformly $S$-absolutely pure in general.
\begin{example}\cite[Example 3.9]{zusapm}
Let $R=\mathbb{Z}$ be the ring of integers, $p$ a prime in $\mathbb{Z}$ and  $S=\{p^n|n\geq 0\}$. Let $J_p$ be the additive group of all $p$-adic integers. Then  $J_p$ is  $s$-absolutely pure but not  $u$-$S$-absolutely pure.
\end{example}
\begin{proof} Let $N$ be a finitely presented $R$-module. Then, by \cite[Chapter 3, Theorem 2.7]{FS15}, $N\cong \mathbb{Z}^n\oplus \bigoplus\limits_{i=1}^m(\mathbb{Z}^n/\langle p^i\rangle)^{n_i}\oplus T$,
 where  $T$ is a finitely generated torsion $S$-divisible torsion-module. Thus $$\Ext_{R}^1(N,J_p)\cong \bigoplus\limits_{i=1}^m\Ext_{R}^1(\mathbb{Z}^n/\langle p^i\rangle,J_p)\cong \bigoplus\limits_{i=1}^m  (J_p/p^i J_p)\cong \bigoplus\limits_{i=1}^m\mathbb{Z}^n/\langle p^i\rangle$$ by \cite[Chapter 9, section 3 (G)]{FS15} and \cite[Chapter 1, Exercise 3(10)]{FS15}. So $\Ext_{R}^1(N,J_p)$ is obviously   $u$-$S$-torsion. However, $J_p$ is not  uniformly $S$-injective by \cite[Theorem 4.5]{QKWCZ21}. So  $J_p$ is not  $u$-$S$-absolutely pure.
 \end{proof}

\section{Some new characterizations of $S$-coherent rings}
 In 2018, Bennis and Hajoui \cite{bh18} introduced the notions of $S$-finitely presented modules and $S$-coherent modules which can be seen as $S$-versions of finitely presented modules and coherent modules. An $R$-module $M$ is said to be \emph{$S$-finitely presented} provided  there exists
an exact sequence of $R$-modules $$0 \rightarrow K\rightarrow F\rightarrow M\rightarrow 0,$$ where $K$ is
$S$-finite and $F$ is  finitely generated free. An $R$-module $M$ is said to be \emph{$S$-coherent} if it is finitely generated and every finitely generated submodule of $M$ is $S$-finitely presented. A ring $R$ is called an \emph{$S$-coherent ring} if $R$ itself is an $S$-coherent $R$-module, that is, every finitely generated ideal of $R$ is $S$-finitely presented.

Recently, the authors \cite{qwz23} gave an $S$-version of Chase Theorem in terms of $S$-flat modules.

 \begin{theorem}\cite[Theorem 4.4]{qwz23} Let $R$ be an ring and $S$  a multiplicative closed set of $R$. Then the following assertions are equivalent:
\begin{enumerate}
    \item $R$ is an $S$-coherent ring;
    \item any product of flat $R$-modules is $S$-flat;
       \item any product of  projective $R$-modules is $S$-flat;
        \item any product of copies of  $R$ is $S$-flat.
\end{enumerate}
\end{theorem}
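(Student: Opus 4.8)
My plan is to imitate the proof of the classical Chase theorem, carrying the multiplicative set $S$ through the argument. The implications $(2)\Rightarrow(3)\Rightarrow(4)$ are immediate, since every projective module is flat and $R$ is projective over itself, so the content lies in $(1)\Rightarrow(2)$ and $(4)\Rightarrow(1)$. For both, fix a finitely generated ideal $J=(a_1,\dots,a_k)$ of $R$, put $\Lambda=\ker\big(R^k\xrightarrow{(a_1,\dots,a_k)}R\big)$, and use the exact sequence $0\to\Lambda\xrightarrow{j}R^k\xrightarrow{p}J\to0$. Two preliminary remarks organize everything. First, $R$ is $S$-coherent if and only if this $\Lambda$ is $S$-finite for every such $J$: the displayed sequence with $R^k$ finitely generated free is an $S$-finite presentation of $J$ exactly when $\Lambda$ is $S$-finite, and conversely any $S$-finite presentation $0\to K\to F_0\to J\to0$ combines with it via Schanuel's lemma to give $K\oplus R^k\cong\Lambda\oplus F_0$, whence $\Lambda$ is a direct summand of an $S$-finite module and so $S$-finite. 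Second, for any $R$-module $M$ one has $\Tor^R_1(R/J,M)\cong\{(m_l)\in M^k\mid\sum_l a_lm_l=0\}\big/\operatorname{im}\big(\Lambda\otimes_RM\to M^k\big)$ from the sequence above, and $M$ is $S$-flat if and only if $\Tor^R_1(R/J,M)_S=0$ for every finitely generated ideal $J$ (every finitely generated ideal of $R_S$ has the form $J_S$, and $\Tor^{R_S}_1((R/J)_S,M_S)\cong\Tor^R_1(R/J,M)_S$).

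$(1)\Rightarrow(2)$. Let $\{F_i\}_{i\in I}$ be flat and set $M=\prod_iF_i$. Since products commute with kernels and each $F_i$ is flat, the numerator in the formula for $\Tor^R_1(R/J,M)$ equals $\prod_i\operatorname{im}(\Lambda\otimes_RF_i\to F_i^k)$. By $S$-coherence pick $s_0\in S$ and a finitely generated $\Lambda_0\subseteq\Lambda$ with $s_0\Lambda\subseteq\Lambda_0$; then $s_0\operatorname{im}(\Lambda\otimes_RF_i\to F_i^k)\subseteq\operatorname{im}(\Lambda_0\otimes_RF_i\to F_i^k)$ for each $i$, and because $\Lambda_0$ is finitely generated the product of the latter images is precisely $\operatorname{im}(\Lambda_0\otimes_RM\to M^k)$, which is contained in $\operatorname{im}(\Lambda\otimes_RM\to M^k)$. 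Hence $s_0\Tor^R_1(R/J,M)=0$, so $\Tor^R_1(R/J,M)_S=0$ and $M$ is $S$-flat (in fact this argument shows $M$ is $s$-flat).

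$(4)\Rightarrow(1)$. The engine here is the natural map $\mu^N_I\colon N\otimes_RR^I\to N^I$, $x\otimes f\mapsto(f(i)x)_{i}$, which is an isomorphism when $N$ is finitely generated free. Tensoring $0\to\Lambda\to R^k\to J\to0$ with $R^I$ and comparing via $\mu^{R^k}_I$ (an isomorphism) and $\mu^\Lambda_I$ yields a natural identification $\Lambda^I/\operatorname{im}(\mu^\Lambda_I)\cong\Tor^R_1(R/J,R^I)$. Now take $I=\Lambda$ and consider the diagonal element $\theta=(\lambda)_{\lambda\in\Lambda}\in\Lambda^\Lambda$. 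Since $R^\Lambda$ is $S$-flat, $\Tor^R_1(R/J,R^\Lambda)_S=0$, so there is $s\in S$ with $s\theta\in\operatorname{im}(\mu^\Lambda_\Lambda)$, say $s\theta=\mu^\Lambda_\Lambda\big(\sum_{t=1}^n\lambda_t\otimes f_t\big)$ with $\lambda_t\in\Lambda$ and $f_t\in R^\Lambda$. Reading off the $\lambda$-coordinate gives $s\lambda=\sum_{t=1}^n f_t(\lambda)\lambda_t\in R\lambda_1+\dots+R\lambda_n$ for every $\lambda\in\Lambda$, so $\Lambda$ is $S$-finite. By the first preliminary remark $J$ is $S$-finitely presented, and as $J$ was arbitrary, $R$ is $S$-coherent.

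I expect the main obstacle to be the bookkeeping with infinite products: in $(1)\Rightarrow(2)$, verifying that $\operatorname{im}(\Lambda_0\otimes_R M\to M^k)=\prod_i\operatorname{im}(\Lambda_0\otimes_RF_i\to F_i^k)$ genuinely holds because $\Lambda_0$ is finitely generated (whereas the analogous statement for $\Lambda$ itself may fail), and in $(4)\Rightarrow(1)$, setting up the natural isomorphism $\Lambda^I/\operatorname{im}(\mu^\Lambda_I)\cong\Tor^R_1(R/J,R^I)$ carefully and then exploiting the self-referential choice $I=\Lambda$ — this is the $S$-theoretic form of the Chase/Lenzing criterion for finite presentability. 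The remaining ingredients (the long exact sequence for $\Tor$, the fact that localization at $S$ commutes with $\Tor$, Schanuel's lemma, and closure of $S$-finiteness under direct summands) are routine.
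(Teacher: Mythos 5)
Your argument is correct, but it is worth noting that the paper does not actually prove this statement: it is quoted from \cite[Theorem~4.4]{qwz23}, and the only proof in the present paper of anything in this circle of ideas is Theorem~\ref{newchase} (the $s$-flat analogue), where only $(1)\Rightarrow(2)$ is argued --- via an $S$-finitely presented second syzygy, the natural transformations $\theta_K\colon K\otimes_R\prod F_i\to\prod(K\otimes_RF_i)$, and the uniform $S$-version of the five lemma --- while the converse directions are inherited from the cited theorem. Your proof is therefore a genuinely self-contained alternative. For $(1)\Rightarrow(2)$ the underlying mechanism is the same (use $s_0\Lambda\subseteq\Lambda_0$ with $\Lambda_0$ finitely generated to control the failure of $-\otimes_R\prod F_i$ to commute with the product), but you carry it out at the level of elements and images of the relation module rather than through the five-lemma formalism, which makes the role of $S$-finiteness of the syzygy completely transparent; your $(4)\Rightarrow(1)$ is the classical Chase--Lenzing diagonal trick, taking the index set to be $\Lambda$ itself and extracting $s\Lambda\subseteq R\lambda_1+\dots+R\lambda_n$ from $s\theta\in\operatorname{im}(\mu^\Lambda_\Lambda)$, combined with Schanuel's lemma and the (correct) observation that $S$-finiteness passes to direct summands. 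All the reductions you use --- flatness of $M_S$ being detectable on finitely generated ideals of $R_S$, every such ideal being of the form $J_S$, and $\Tor_1^{R_S}((R/J)_S,M_S)\cong\Tor_1^R(R/J,M)_S$ --- are sound. The one over-claim is the parenthetical ``in fact this argument shows $M$ is $s$-flat'': your computation only bounds $\Tor_1^R(R/J,\prod F_i)$ for cyclic finitely presented modules $R/J$, whereas $s$-flatness requires a uniform annihilator of $\Tor_1^R(N,\prod F_i)$ for \emph{every} finitely presented $N$; that stronger statement is exactly what the paper's Theorem~\ref{newchase} proves, and it needs the $S$-finite presentability of the first syzygy (\cite[Theorem~7]{znotescoh}), not just $S$-finiteness of $\Lambda$. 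Since that remark is not load-bearing for the theorem as stated, the proof stands.
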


Next, we characterized $S$-coherent rings in terms of $s$-flat modules.
 \begin{theorem}\label{newchase} Let $R$ be an ring and $S$  a multiplicative closed set of $R$. Then the following assertions are equivalent:
	\begin{enumerate}
		\item $R$ is an $S$-coherent ring;
		\item any product of flat $R$-modules is $s$-flat;
		\item any product of  projective $R$-modules is $s$-flat;
		\item any product of copies of  $R$ is $s$-flat.
	\end{enumerate}
\end{theorem}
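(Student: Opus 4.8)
The plan is to run the cycle $(1)\Rightarrow(2)\Rightarrow(3)\Rightarrow(4)\Rightarrow(1)$, with most of the work in the first implication. Two of the easy arrows are purely formal: since projective modules are flat and $R$ is projective over itself, a product of copies of $R$ is in particular a product of projectives, which is in particular a product of flats, so $(2)\Rightarrow(3)\Rightarrow(4)$ are immediate. For $(4)\Rightarrow(1)$ I would invoke two facts already on record: every $s$-flat module is $S$-flat (the observation right after the definition of $s$-flatness), so $(4)$ forces every product of copies of $R$ to be $S$-flat, and then \cite[Theorem 4.4]{qwz23} gives that $R$ is $S$-coherent. So everything reduces to $(1)\Rightarrow(2)$.

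For $(1)\Rightarrow(2)$, fix a family $\{F_i\}_{i\in I}$ of flat $R$-modules and an arbitrary finitely presented $R$-module $M$; I must produce one $s\in S$, allowed to depend on $M$ but not on the $F_i$, with $s\,\Tor_1^R(M,\prod_i F_i)=0$. Start from a presentation $0\to K\to R^n\to M\to 0$ with $K$ finitely generated. Since $R$ is $S$-coherent, $K$ — being a finitely generated submodule of a finitely generated free module — is $S$-finitely presented (the $S$-analogue of Chase's characterization, \cite{bh18}), so fix $0\to L\to R^m\xrightarrow{p}K\to 0$ with $R^m$ finitely generated free and $L$ $S$-finite, witnessed by a finitely generated $L'\subseteq L$ and $s_0\in S$ with $s_0L\subseteq L'$. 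The claim is that this $s_0$ works simultaneously for all the $F_i$.

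The argument would have two steps. First, because each $F_i$ is flat, I would identify $\Tor_1^R(M,\prod_i F_i)$ with $\ker\bigl(\alpha_K\colon K\otimes_R\prod_i F_i\to\prod_i(K\otimes_R F_i)\bigr)$, where $\alpha_K$ is the canonical map: tensoring $0\to K\to R^n\to M\to 0$ with $\prod_i F_i$ realizes $\Tor_1^R(M,\prod_i F_i)$ as the kernel of $K\otimes_R\prod_i F_i\to R^n\otimes_R\prod_i F_i$, and a chase in the naturality square comparing this with $\prod_i\ker(K\otimes_R F_i\to R^n\otimes_R F_i)=\prod_i\Tor_1^R(M,F_i)=0$ — using that $\alpha_{R^n}$ is an isomorphism since $R^n$ is finitely generated free — yields the identification. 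Second, I would show $s_0\ker(\alpha_K)=0$: given $x\in\ker(\alpha_K)$, lift it along the surjection $R^m\otimes_R\prod_i F_i\twoheadrightarrow K\otimes_R\prod_i F_i$ induced by $p$ to some $y$; commutativity forces $\alpha_{R^m}(y)$ to lie in $\ker\bigl(\prod_i(R^m\otimes_R F_i)\to\prod_i(K\otimes_R F_i)\bigr)=\prod_i\im(L\otimes_R F_i\to R^m\otimes_R F_i)$; multiplying by $s_0$ pushes this into $\prod_i\im(L'\otimes_R F_i\to R^m\otimes_R F_i)$ because $s_0L\subseteq L'$; since $L'$ is finitely generated the canonical map $L'\otimes_R\prod_i F_i\to\prod_i(L'\otimes_R F_i)$ is onto and $\alpha_{R^m}$ is an isomorphism, so this last product equals $\alpha_{R^m}$ of $\im(L'\otimes_R\prod_i F_i\to R^m\otimes_R\prod_i F_i)$; hence $s_0y\in\im(L\otimes_R\prod_i F_i\to R^m\otimes_R\prod_i F_i)$, which is exactly the kernel of $R^m\otimes_R\prod_i F_i\to K\otimes_R\prod_i F_i$, so $s_0x=0$. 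As $M$ was arbitrary, $\prod_i F_i$ is $s$-flat, which is $(2)$.

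The hard part is the uniformity. $S$-coherence already gives, for each finitely presented $M$, that $\Tor_1^R(M,\prod_i F_i)$ is $S$-torsion — this is essentially the content of \cite[Theorem 4.4]{qwz23} — but manufacturing a single annihilator $s_0$ that does not see the index set $I$ or the individual $F_i$ requires pinning down where the torsion originates, namely in the $S$-finiteness of the second syzygy $L$. The remaining obstacle is bookkeeping: one must keep straight which of the comparison maps ($\alpha_{R^n}$, $\alpha_{R^m}$) are isomorphisms, which ($\alpha_{L'}$, because $L'$ is finitely generated) are merely surjective, and which ($\alpha_K$) are neither, since the target conclusion $s_0\ker(\alpha_K)=0$ is precisely a statement about the failure of $\alpha_K$ to be injective.
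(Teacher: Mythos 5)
Your proposal is correct and follows essentially the same route as the paper: reduce everything to $(1)\Rightarrow(2)$, resolve the finitely presented module two syzygies deep so that the second syzygy is $S$-finite with a finitely generated approximation, and exploit that the comparison map $N\otimes_R\prod F_i\to\prod(N\otimes_RF_i)$ is surjective for finitely generated $N$ and bijective for finitely presented $N$. The only difference is presentational: where the paper invokes a uniform $S$-version of the five-lemma on the stacked comparison diagrams, you carry out the equivalent element chase by hand, which is a sound (and self-contained) substitute.
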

\begin{proof} We only need to show $(1)\Rightarrow (2)$. Let $\{F_i\}$ be a family of flat $R$-modules. Let $M$ be a finitely presented $R$-module. Then there is an exact sequence $$0\rightarrow N\rightarrow P\rightarrow M\rightarrow 0$$ with $P$ finitely generated projective. It follows by \cite[Theorem 7]{znotescoh}that  $N$ is $S$-finitely presented. Then	there exists
	an exact sequence of $R$-modules $$0 \rightarrow K\rightarrow Q\rightarrow N\rightarrow 0,$$ where $K$ is
	$S$-finite and $Q$ is  finitely generated projective.
	So there exists
	an exact sequence of $R$-modules $$0 \rightarrow K'\rightarrow K\rightarrow T\rightarrow 0,$$ where $K'$ is finitely generated and $T$ is uniformly $S$-torsion.
	
Consider the following commutative diagrams of exact sequences:
$$\xymatrix@R=20pt@C=25pt{
	0\ar[r]^{}&\Tor_1^R(M,\prod F_i)\ar[r]^{}\ar[d]^{\theta^1_M}&  N\otimes_R\prod F_i\ar[r]^{}\ar[d]^{\theta_N} & P\otimes_R\prod F_i\ar[d]^{\cong} &\\
	0\ar@{=}[r]^{}&\prod\Tor_1^R(M, F_i) \ar[r]^{}& \prod (N\otimes_RF_i) \ar[r]^{} &\prod (P\otimes_RF_i) & \\}$$
	
$$\xymatrix@R=20pt@C=25pt{
&K\otimes_R\prod F_i\ar[r]^{}\ar[d]^{\theta_K}&  Q\otimes_R\prod F_i\ar[r]^{}\ar[d]^{\cong} & N\otimes_R\prod F_i\ar[d]^{\theta_N}\ar[r]^{} &0\\
	0\ar[r]^{}&\prod (K\otimes_RF_i)\ar[r]^{}& \prod (Q\otimes_RF_i) \ar[r]^{} &\prod (N\otimes_RF_i)\ar[r]^{} &0 \\}$$	
and	
	$$\xymatrix@R=20pt@C=25pt{
	\Tor_1^R(T,\prod F_i)\ar[r]^{}	&K'\otimes_R\prod F_i\ar[r]^{}\ar[d]^{\theta_{K'}}&  K\otimes_R\prod F_i\ar[r]^{}\ar[d]^{\theta_K} & T\otimes_R\prod F_i\ar[d]^{\theta_T}\ar[r]^{} &0\\
		0\ar[r]^{}&\prod (K'\otimes_RF_i)\ar[r]^{}& \prod (K\otimes_RF_i) \ar[r]^{} &\prod (T\otimes_RF_i)\ar[r]^{} &0 .\\}$$	
The rest of this proof can be induced by the uniform $S$-version of five-lemma (see \cite[Theorem 1.2]{ztugdocr}). Indeed, 	
since $K'$ is finitely generated, $\theta_{K'}$ is an epimorphism. So  $\theta_K$ is a uniformly $S$-epimorphism.  Then $\theta_N$ is a uniformly $S$-isomorphism. So  $\theta^1_M$
is also a uniformly $S$-isomorphism. Hence $\Tor_1^R(M,\prod F_i)$ is uniformly $S$-torsion. Consequently, $\prod F_i$	is $s$-flat.
\end{proof}

It is well-known that a ring $R$ is coherent if and only if any pure quotient of  absolutely pure $R$-modules is absolutely pure, if and only if any direct limit of  absolutely pure $R$-modules is absolutely pure. Now, we give an $S$-version of this result.

\begin{theorem}\label{s-d-d-non}
Let $R$ be a ring. The following statements are equivalent:
\begin{enumerate}
    \item  $R$ is $S$-coherent.
    \item any $s$-pure quotient of  $s$-absolutely pure $R$-modules is $s$-absolutely pure;
    \item any pure quotient of  $s$-absolutely pure $R$-modules is $s$-absolutely pure;
      \item any pure quotient of  absolutely pure $R$-modules is $s$-absolutely pure;
   \item  any direct limit of  absolutely pure $R$-modules is $s$-absolutely pure.

\end{enumerate}
\end{theorem}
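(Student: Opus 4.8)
The plan is to prove $(1)\Rightarrow(2)\Rightarrow(3)\Rightarrow(4)$ and $(3)\Rightarrow(5)$, and then to close the loop with $(4)\Rightarrow(1)$ and $(5)\Rightarrow(1)$. Two of the links are immediate: $(2)\Rightarrow(3)$ because every pure exact sequence is $s$-pure, and $(3)\Rightarrow(4)$ because every absolutely pure module is $s$-absolutely pure, so in each case the conclusion is a special instance of the hypothesis. For $(3)\Rightarrow(5)$, given a directed system $\{M_i\}$ of absolutely pure modules I would use that $\prod_iM_i$ is again absolutely pure (since $\Ext_R^1(N,-)$ commutes with direct products) and that the canonical monomorphism $\bigoplus_iM_i\hookrightarrow\prod_iM_i$ is pure; then $\bigoplus_iM_i$ is an $s$-pure submodule of an $s$-absolutely pure module, hence $s$-absolutely pure by Proposition \ref{spsub}, and since $\varinjlim_iM_i$ is a pure quotient of $\bigoplus_iM_i$, hypothesis $(3)$ gives that $\varinjlim_iM_i$ is $s$-absolutely pure.

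For $(1)\Rightarrow(2)$ I would adapt the classical proof that over a coherent ring a pure quotient of an absolutely pure module is again absolutely pure. Let $0\to A\to B\xrightarrow{g}C\to 0$ be $s$-pure with $B$ $s$-absolutely pure; by Theorem \ref{c-s-abp} it suffices to show that for each finitely generated submodule $K$ of a finitely generated free module $F$ and each $\alpha\colon K\to C$ there exist $s\in S$ and $\beta\colon F\to C$ with $\beta|_K=s\alpha$. Since $R$ is $S$-coherent, $K$ is $S$-finitely presented; and since $0\to A\to B\to C\to0$ is $s$-pure, $A$ is an $s$-pure submodule of $B$, hence $s$-absolutely pure by Proposition \ref{spsub}. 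The technical core is a sublemma: for an $S$-finitely presented module $K$ and an $s$-pure exact sequence $0\to A\to B\xrightarrow{g}C\to0$ with $A$ $s$-absolutely pure, every $\gamma\colon K\to C$ admits $s'\in S$ with $s'\gamma$ factoring through $g$. To prove it one takes $0\to K_1\to Q_0\to K\to 0$ with $Q_0$ finitely generated free and $K_1$ $S$-finite (so $tK_1\subseteq K_1'$ for some $t\in S$ and finitely generated $K_1'\subseteq K_1$), lifts $\gamma$ through $g$ along the free module $Q_0$, observes that the obstruction lands in $A$, removes it on $K_1'$ using that $A$ is $s$-absolutely pure (Theorem \ref{c-s-abp}), multiplies by $t$ so that the adjusted map vanishes on all of $K_1$, and factors it through $Q_0/K_1\cong K$. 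Applying the sublemma to $\alpha$ gives $\tilde\alpha\colon K\to B$ with $g\tilde\alpha=s_1\alpha$; applying Theorem \ref{c-s-abp} to $\tilde\alpha$ and the $s$-absolutely pure module $B$ gives $\tilde\beta\colon F\to B$ with $\tilde\beta|_K=s_2\tilde\alpha$; then $\beta:=g\tilde\beta$ works with $s=s_1s_2$.

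For $(4)\Rightarrow(1)$ and $(5)\Rightarrow(1)$ I would exploit the common consequence that, for every family $\{E_\lambda\}_{\lambda\in\Lambda}$ of injective $R$-modules, the module $\prod_\lambda E_\lambda/\bigoplus_\lambda E_\lambda$ is $s$-absolutely pure: under $(4)$ because it is a pure quotient of the injective (hence absolutely pure) module $\prod_\lambda E_\lambda$, and under $(5)$ because $\prod_\lambda E_\lambda/\bigoplus_\lambda E_\lambda\cong\varinjlim_{F}\prod_{\lambda\notin F}E_\lambda$, a direct limit over finite $F\subseteq\Lambda$ of injective modules. Now let $I$ be a finitely generated ideal and $0\to K\to R^n\to I\to0$. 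Dimension shifting along $0\to I\to R\to R/I\to0$ and along $0\to\bigoplus_\lambda E_\lambda\to\prod_\lambda E_\lambda\to\prod_\lambda E_\lambda/\bigoplus_\lambda E_\lambda\to0$, using the injectivity of each $E_\lambda$, produces a natural isomorphism $\Ext_R^1\!\big(R/I,\ \prod_\lambda E_\lambda/\bigoplus_\lambda E_\lambda\big)\cong\Hom_R\!\big(K,\bigoplus_\lambda E_\lambda\big)\big/\bigoplus_\lambda\Hom_R(K,E_\lambda)$. If $K$ were not $S$-finite, then choosing a chain of finitely generated submodules $L_1\subseteq L_2\subseteq\cdots$ of $K$ with union $K$ (the countably generated case; the general case reduces to this), putting $\Lambda=\mathbb N$ and $E_n=E(K/L_n)$, and using the ``diagonal'' homomorphism $\Phi\colon K\to\bigoplus_nE_n$, $\Phi(x)=(x+L_n)_n$, one checks that $sK\not\subseteq L_n$ for all $n$ and all $s\in S$, so $s\Phi$ has infinite support and represents a non-zero class for every $s$; hence the displayed $\Ext$ group fails to be $u$-$S$-torsion, contradicting that $R/I$ is finitely presented and $\prod_\lambda E_\lambda/\bigoplus_\lambda E_\lambda$ is $s$-absolutely pure. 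Thus $K$ is $S$-finite, $I$ is $S$-finitely presented, and $R$ is $S$-coherent.

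The step I expect to be the main obstacle is this last $\Phi$-construction: one must verify carefully that it is exactly the failure of $S$-finiteness of $K$ (not merely of finite generation) that forces $s\Phi$ to have infinite support for all $s\in S$ at once, and one must nail down the identification of $\Ext_R^1(R/I,\prod_\lambda E_\lambda/\bigoplus_\lambda E_\lambda)$ with the $\Hom$-cokernel by checking which of the connecting maps are honestly surjective (this is where injectivity of the $E_\lambda$ is used). The $S$-bookkeeping in the sublemma for $(1)\Rightarrow(2)$ is the other delicate point, but it is essentially a twisted copy of a standard diagram chase.
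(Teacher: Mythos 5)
Your forward directions are essentially sound and close to the paper's own argument: $(1)\Rightarrow(2)$ via the sublemma (lift $\gamma$ along a free presentation $0\to K_1\to Q_0\to K\to 0$ with $K_1$ $S$-finite, correct the obstruction on a finitely generated $K_1'$ with $tK_1\subseteq K_1'$ using that $A$ is $s$-absolutely pure by Proposition \ref{spsub}) is a repackaging of the paper's pushout diagram chase, and your $(3)\Rightarrow(5)$ via $\bigoplus M_i\hookrightarrow\prod M_i$ works (the paper takes the shorter route $(4)\Rightarrow(5)$ using that a direct sum of absolutely pure modules is absolutely pure). The problem is in closing the loop.

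Your $(4)\Rightarrow(1)$ and $(5)\Rightarrow(1)$ both rest on the $\Phi$-construction, and there is a genuine gap there: the map $\Phi(x)=(x+L_n)_n$ lands in $\bigoplus_n E(K/L_n)$ only if every $x\in K$ lies in $L_n$ for $n$ large, i.e.\ only if $\bigcup_nL_n=K$. Since the chain is countable, this forces $K$ to be countably generated, and your claim that ``the general case reduces to this'' is not justified and does not appear fixable within your framework: if you pass to a countably generated non-$S$-finite submodule $K_0\subseteq K$ (which itself already requires care, e.g.\ when $S$ is uncountable), then $K_0$ is no longer the syzygy of the finitely generated ideal $I$, so the identification of $\mathrm{Hom}_R(K_0,\bigoplus E_\lambda)/\bigoplus\mathrm{Hom}_R(K_0,E_\lambda)$ with $\mathrm{Ext}^1_R(R/I,\prod E_\lambda/\bigoplus E_\lambda)$ is lost, and the hypothesis (which only speaks of finitely presented test modules) no longer applies. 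This is unlike the Bass--Papp argument you are implicitly imitating, where Baer's criterion lets one test injectivity against the countably generated ideal $\bigcup_nL_n$ itself. The paper avoids the issue entirely in its $(5)\Rightarrow(1)$: it writes the finitely generated ideal $I$ as a direct limit of finitely presented modules, uses the injective envelopes $E(M_i)$ and \cite[Lemma 2.13]{gt} to show that $\varinjlim\mathrm{Hom}_R(I,M_i)\to\mathrm{Hom}_R(I,\varinjlim M_i)$ is a $u$-$S$-epimorphism, and deduces that $I$ is $S$-finitely presented with no countability restriction. You should either adopt that direct-limit argument for $(5)\Rightarrow(1)$ (and replace your $(4)\Rightarrow(1)$ by the easy $(4)\Rightarrow(5)$), or supply an honest treatment of the uncountably generated case.
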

\begin{proof} $(1)\Rightarrow (2)$: Suppose  $R$ is an $S$-coherent ring. Let $B$ be an $s$-absolutely pure $R$-module. Let $C$ be an $s$-pure quotient of $B$ and $A$ be an  $s$-pure submodule of $B$. Let $P$ be a finitely generated projective $R$-module and $K$ finitely generated submodule of $P$. Let $i:K\rightarrow P$ be the natural embedding map,  and  $f_C:K\rightarrow C$ an $R$-homomorphism. We claim that there is an $R$-homomorphism $g:P\rightarrow C$ such that $sf_C=gi.$  Consider the following exact sequence
$0\rightarrow L\xrightarrow{i_L} P'\xrightarrow{\pi_{P'}} K\rightarrow 0$ with $P'$ finitely generated projective. Then we have the following exact sequence of commutative diagram:
$$\xymatrix@R=25pt@C=40pt{
	0\ar[r]^{}&L \ar[r]^{i_L}\ar[d]_{f_A} &P' \ar[d]^{f_B}\ar[r]^{} &\ar[d]^{f_C} K  \ar[r]^{}& 0 \\
	0\ar[r]&A\ar[r]^{i_A} &B\ar[r]^{\pi_B} & C \ar[r]^{}&0.\\ }$$
 Since $R$ is $S$-coherent, $L$ is $S$-finite, so there is an exact sequence $0\rightarrow K'\xrightarrow{i_{K'}} L\xrightarrow{\pi_L} T\rightarrow 0$ with $s_1T=0$ for some $s_1\in S$. Consider the following push-out:

$$\xymatrix@R=20pt@C=25pt{ & & 0\ar[d]&0\ar[d]&\\
	0 \ar[r]^{} & K'\ar@{=}[d]\ar[r]^{i_{K'}} & L\ar[d]_{i_L}\ar[r]&T\ar[d]\ar[r]^{} &  0\\
	0 \ar[r]^{} & K'\ar[r]^{} & P' \ar[d]\ar[r]^{} &X\ar[d]\ar[r]^{} &  0\\
&  & K \ar[d]\ar@{=}[r]^{} &K\ar[d] &  \\
	& & 0 &0 &\\}$$
Let $f_A:L\rightarrow A$ be an $R$-homomorphism. It follows by Proposition \ref{spsub} that $A$ is $s$-absolutely pure, there is an $R$-homomorphism $g_A:P'\rightarrow A$ such that $s_2f_Ai_{K'}=g_Ai_Li_{K'}$ for some $s_2\in S$. Hence $s_1s_2f_A=s_1g_Ai_L$. It follows by  \cite[Exercise 1.60]{fk16} that we have the following commutative diagram:
$$\xymatrix@R=25pt@C=40pt{
	0\ar[r]^{}&L \ar[r]^{i_L}\ar[d]_{s_1s_2f_A} &P'\ar@{.>}[ld]^{s_1g_A} \ar[d]^{s_1s_2f_B}\ar[r]^{} &\ar[d]^{s_1s_2f_C} K \ar@{.>}[ld]^{g_B} \ar[r]^{}& 0 \\
	0\ar[r]&A\ar[r]^{i_A} &B\ar[r]^{\pi_B} & C \ar[r]^{}&0,\\ }$$
satisfying $s_1s_2f_C=\pi_Bg_B$.
Since $B$ is $s$-absolutely pure,  there exists $R$-homomorphism $g'_B:P\rightarrow B$ such that the following diagram is commutative:
	$$\xymatrix@R=30pt@C=40pt{
	K\ar[r]^{i}\ar[d]_{s_3g_B}&P\ar@{.>}[ld]^{g'_B}\\
B&\\}$$
for some $s_3\in S$ by Theorem \ref{c-s-abp}.
Setting $s=s_1s_2s_3$, we have $sf_C=s_1s_2s_3f_C=s_3\pi_Bg_B=\pi_Bg'_Bi=gi$, where $g=\pi_Bg'_B$.  It follows by Theorem \ref{c-s-abp} that $C$ is an $s$-absolutely pure $R$-module.

	$(2)\Rightarrow (3)\Rightarrow (4)$: Trivial.
		
	$(4)\Rightarrow (5)$:	Let $\{M_i\}_{i\in\Gamma}$ be a direct system of absolutely pure $R$-modules. Then there is an pure exact sequence  $0\rightarrow K\rightarrow \bigoplus\limits_{i\in\Gamma}M_i\rightarrow {\lim\limits_{\longrightarrow _{i\in\Gamma}}}M_i\rightarrow 0$. Note that $\bigoplus\limits_{i\in\Gamma}M_i$ is absolutely pure, so is ${\lim\limits_{\longrightarrow _{i\in\Gamma}}}M_i$ by (2).

	$(5)\Rightarrow (1)$: Let $I$ be a finitely generated  ideal of $R$,  $\{M_i\}_{i\in\Gamma}$  a direct system of $R$-modules. Let $\alpha: I\rightarrow \lim\limits_{\longrightarrow }M_i$ be a homomorphism. For any $i\in \Gamma$, $E(M_i)$ is the injective envelope of $M_i$. Then $E(M_i)$ is absolutely pure. By (4), we have $s\Ext_R^1(R/I, \lim\limits_{\longrightarrow }M_i)=0$ for some $s\in S.$ So there exists an $R$-homomorphism $\beta:R\rightarrow \lim\limits_{\longrightarrow }E(M_i)$ such that the following  diagram commutes:
	$$\xymatrix@R=20pt@C=25pt{
		0\ar[r]^{}&I\ar[r]^{}\ar[d]_{s\alpha}&  R\ar[r]^{}\ar@{.>}[d]^{\beta} & R/I\ar@{.>}[d]^{}\ar[r]^{} &0\\
		0\ar[r]^{}&{\lim\limits_{\longrightarrow }}M_i \ar[r]^{}&{\lim\limits_{\longrightarrow }}E(M_i)  \ar[r]^{} &{\lim\limits_{\longrightarrow }}E(M_i)/M_i\ar[r]^{} & 0.\\}$$
	Thus, by \cite[Lemma 2.13]{gt}, there exists $j\in \Gamma$, such that $\beta$ can factor through $R\xrightarrow{\beta_j} E(M_j)$. Consider the following commutative diagram:
	$$\xymatrix@R=20pt@C=25pt{
		0\ar[r]^{}&I\ar[r]^{}\ar@{.>}[d]_{s\alpha_j}&  R\ar[r]^{}\ar[d]^{\beta_j} & R/I\ar@{.>}[d]^{}\ar[r]^{} &0\\
		0\ar[r]^{}&M_j \ar[r]^{}&E(M_j)  \ar[r]^{} &E(M_j)/M_j\ar[r]^{} & 0.\\}$$
	Since the composition $I\rightarrow R\rightarrow E(M_j)\rightarrow E(M_j)/M_j$ becomes to be $0$ in the direct limit, we can assume $I\rightarrow R\rightarrow E(M_j)$ can factor through some  $I\xrightarrow{s\alpha_j} M_j$. Thus $s\alpha$ can factor through $M_j$. Consequently, the natural homomorphism  $\lim\limits_{\longrightarrow } \Hom_R(I,M_i)\xrightarrow{\phi}  \Hom_R(I, \lim\limits_{\longrightarrow }M_i)$ is a $u$-$S$-epimorphism with respect to $s$. Now suppose  $\{M_i\}_{i\in\Gamma}$ is a direct system of finitely presented $R$-modules such that $\lim\limits_{\longrightarrow } M_i=I$. Then there exists $f\in \Hom_R(I,M_j)$ with $j\in \Gamma$ such that the identity map  $s\Id_I= \phi(u_j(f))$ where $u_j$ is the natural homomorphism $\Hom_R(I,M_j)\rightarrow \lim\limits_{\longrightarrow } \Hom_R(I,M_i)$. That is, we have the following commutative diagram: 	
	$$\xymatrix@R=10pt@C=40pt{
		I\ar[rd]^{f}\ar[dd]_{s\Id_I}&\\
		&M_i\ar[ld]^{\delta_i}\\
		I=\lim\limits_{\longrightarrow } M_i&\\}$$
	So 	$I$ is $u$-$S$-finitely presented.
	We claim that $I$ is $S$-finitely presented. Indeed,  since $I$ is finitely generated, there is an exact sequence $0\rightarrow L\rightarrow Q\rightarrow I\rightarrow 0$with $Q$ finitely generated free. It follows by \cite[Theorem 2.2(4)]{zuscoh} that $L$ is $S$-finite. Hence $R$ is an $S$-coherent ring.
\end{proof}

 It is well-known that  a ring $R$ is  coherent if and only if $\Hom_R(E,I)$ is  flat for any absolutely pure module $E$ and injective module $I$, if and only if $\Hom_R(\Hom_R(F,I_1),I_2)$ is flat  for any  flat module $F$     and any injective modules $I_1$ and $I_2$ (see \cite[Lemma 4]{CD93}).

\begin{theorem}\label{phi-coh-fp} Let $R$ be a ring and $S$ be a   multiplicative subset of $R$.  Then the following statements are equivalent:
	\begin{enumerate}
		\item $R$ is an  $S$-coherent ring;
		\item    $\Hom_R(E,I)$ is $s$-flat  for any    $s$-absolutely pure module $E$  and any injective module $I$;
		\item    if $I$  is an injective cogenerator, then   $\Hom_R(E,I)$ is $s$-flat  for any  $s$-absolutely pure module $E$;
		\item   $\Hom_R(\Hom_R(F,I_1),I_2)$ is $s$-flat  for any  $s$-flat module $F$     and any injective modules $I_1$ and $I_2$;
\item if $I_1$ and $I_2$ are injective cogenerators, then $\Hom_R(\Hom_R(F,I_1),I_2)$ is $s$-flat    for any   $s$-flat module $F$.
	\end{enumerate}
\end{theorem}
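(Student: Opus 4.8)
The plan is to prove $(1)\Rightarrow(2)\Rightarrow(3)$ and $(2)\Rightarrow(4)\Rightarrow(5)$, and then to close the circle with $(3)\Rightarrow(1)$ and $(5)\Rightarrow(3)$. The implications $(2)\Rightarrow(3)$ and $(4)\Rightarrow(5)$ are immediate, since an injective cogenerator is in particular injective, and $(2)\Rightarrow(4)$ is a one-line consequence of Proposition \ref{flat-FP-injective}: if $F$ is $s$-flat then $\Hom_R(F,I_1)$ is $s$-absolutely pure, hence $\Hom_R(\Hom_R(F,I_1),I_2)$ is $s$-flat by $(2)$. So the genuine content is $(1)\Rightarrow(2)$, $(3)\Rightarrow(1)$ and $(5)\Rightarrow(3)$.

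For $(1)\Rightarrow(2)$, I fix an $s$-absolutely pure module $E$, an injective module $I$ and a finitely presented module $M$, and choose a presentation $0\to K\to P_0\to M\to 0$ with $P_0$ finitely generated free, so that $\Tor_1^R(M,\Hom_R(E,I))=\Ker(K\otimes_R\Hom_R(E,I)\to P_0\otimes_R\Hom_R(E,I))$. The main device is the family of natural evaluation maps $\sigma_N\colon N\otimes_R\Hom_R(E,I)\to\Hom_R(\Hom_R(N,E),I)$, which are isomorphisms whenever $N$ is finitely generated free. Applying $\Hom_R(-,E)$ and then the exact functor $\Hom_R(-,I)$ to $0\to K\to P_0\to M\to 0$ and using naturality of $\sigma$, one identifies $\Ker(\Hom_R(\Hom_R(K,E),I)\to\Hom_R(\Hom_R(P_0,E),I))$ with $\Hom_R(\Ext_R^1(M,E),I)$, which is $u$-$S$-torsion since $E$ is $s$-absolutely pure (and $\Hom_R(-,I)$ carries $u$-$S$-torsion modules to $u$-$S$-torsion modules). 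It then remains to show $\Ker\sigma_K$ is $u$-$S$-torsion, and this is exactly where the hypothesis enters: as $K$ is a finitely generated submodule of a finitely generated free module and $R$ is $S$-coherent, $K$ is $S$-finitely presented, hence admits a ``$u$-$S$-presentation'' $F_1\to F_0\to K\to 0$ by finitely generated free modules (exact at $K$, with homology at $F_0$ killed by some $t\in S$); a diagram chase through $\sigma_{F_1},\sigma_{F_0},\sigma_K$ — the first two being isomorphisms — gives $t\,\Ker\sigma_K=0$. Thus $\Tor_1^R(M,\Hom_R(E,I))$ is $u$-$S$-torsion and $\Hom_R(E,I)$ is $s$-flat.

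For $(3)\Rightarrow(1)$, I verify condition $(4)$ of Theorem \ref{s-d-d-non}. Let $0\to A\to B\to C\to 0$ be pure exact with $B$ absolutely pure and fix an injective cogenerator $I_0$. Dualizing into $I_0$ yields a pure exact sequence $0\to\Hom_R(C,I_0)\to\Hom_R(B,I_0)\to\Hom_R(A,I_0)\to 0$: it is exact because $I_0$ is injective, and for finitely presented $M$ the sequence obtained by applying $M\otimes_R-$ is, via the isomorphisms $\sigma_M$ (isomorphisms for finitely presented $M$), the $I_0$-dual of the sequence $0\to\Hom_R(M,A)\to\Hom_R(M,B)\to\Hom_R(M,C)\to 0$, exact by purity. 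Since $B$ is $s$-absolutely pure, $\Hom_R(B,I_0)$ is $s$-flat by $(3)$; a pure submodule of an $s$-flat module is $s$-flat ($\Tor_1^R(M,-)$ takes a pure exact sequence to a left-exact one, embedding $\Tor_1^R(M,\Hom_R(C,I_0))$ into the $u$-$S$-torsion module $\Tor_1^R(M,\Hom_R(B,I_0))$), so $\Hom_R(C,I_0)$ is $s$-flat; by Proposition \ref{flat-FP-injective}, $\Hom_R(\Hom_R(C,I_0),I_0)$ is $s$-absolutely pure; and $C$ embeds as a pure, hence $s$-pure, submodule of this double dual because $I_0$ is a cogenerator. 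By Proposition \ref{spsub}, $C$ is $s$-absolutely pure, so $R$ is $S$-coherent by Theorem \ref{s-d-d-non}.

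For $(5)\Rightarrow(3)$, first note that under $(5)$ the module $\Hom_R(J,I_0)$ is $s$-flat for every injective $J$ and every injective cogenerator $I_0$: as $I_0$ is a cogenerator, $J$ is a direct summand of a power $I_0^{\Lambda}\cong\Hom_R(R^{(\Lambda)},I_0)$, the free module $R^{(\Lambda)}$ is $s$-flat, so $\Hom_R(\Hom_R(R^{(\Lambda)},I_0),I_0)$ is $s$-flat by $(5)$, and $s$-flatness passes to direct summands. Now let $E$ be $s$-absolutely pure; by Theorem \ref{c-s-abp} it is $s$-pure in its injective envelope $E(E)$, so $0\to E\to E(E)\to E(E)/E\to 0$ is $s$-pure, and dualizing into $I_0$ (again through the isomorphisms $\sigma_M$) produces an $s$-pure exact sequence $0\to\Hom_R(E(E)/E,I_0)\to\Hom_R(E(E),I_0)\to\Hom_R(E,I_0)\to 0$ whose middle term is $s$-flat by the previous observation. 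Hence $\Hom_R(E,I_0)$ is an $s$-pure quotient of an $s$-flat module, so $s$-flat by Lemma \ref{spq-f}; this is $(3)$. The principal obstacle in the whole scheme is the homological bookkeeping in $(1)\Rightarrow(2)$, namely the $u$-$S$-refinement of the classical isomorphism $\Tor_1^R(M,\Hom_R(E,I))\cong\Hom_R(\Ext_R^1(M,E),I)$ — equivalently, proving that $\sigma_K$ becomes a $u$-$S$-monomorphism once $K$ is merely $S$-finitely presented rather than finitely presented; everything else is either formal or a direct appeal to Theorem \ref{s-d-d-non}, Proposition \ref{flat-FP-injective} and Lemma \ref{spq-f}.
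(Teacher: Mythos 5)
Your proof is correct, but it closes the cycle of implications along a genuinely different route from the paper. The forward half coincides: your $(1)\Rightarrow(2)$ is the paper's argument (the natural transformation $\sigma_N\colon N\otimes_R\Hom_R(E,I)\to\Hom_R(\Hom_R(N,E),I)$, an isomorphism on finitely generated free resp.\ finitely presented modules, propagated to the $S$-finitely presented syzygy of $M$ by a $u$-$S$-five-lemma chase), and the implications $(2)\Rightarrow(3)$, $(2)\Rightarrow(4)$, $(4)\Rightarrow(5)$ are the same appeals to Proposition \ref{flat-FP-injective}. The divergence is in the return trip: the paper proves only $(5)\Rightarrow(1)$, by showing that $\prod F_i$ is $s$-flat for flat $F_i$ (using that $\oplus\Hom_R(F_i,I_1)$ is pure in $\prod\Hom_R(F_i,I_1)$ and that $\prod F_i$ is pure in $\prod\Hom_R(\Hom_R(F_i,I_1),I_2)$) and then invoking the $s$-flat Chase theorem (Theorem \ref{newchase}); you instead prove $(5)\Rightarrow(3)$ by dualizing the $s$-pure embedding $E\hookrightarrow E(E)$ and applying Lemma \ref{spq-f}, and $(3)\Rightarrow(1)$ by double-dualizing a pure quotient of an absolutely pure module into criterion $(4)$ of Theorem \ref{s-d-d-non}. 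Your route buys a direct homological link back to the absolutely-pure characterization of $S$-coherence (which the paper only obtains by composing), at the cost of two standard purity facts the paper does not record and which you should justify explicitly: that the canonical map $C\to\Hom_R(\Hom_R(C,I_0),I_0)$ is a pure monomorphism, and that a pure submodule of an $s$-flat module is $s$-flat --- for the latter your parenthetical is essentially right, but the injectivity of $\Tor_1^R(M,X)\to\Tor_1^R(M,Y)$ for a pure sequence $0\to X\to Y\to Z\to 0$ needs the dimension-shift observation that all connecting maps $\Tor_{n+1}^R(W,Z)\to\Tor_n^R(W,X)$ vanish, not only the one into $W\otimes_RX$. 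The paper's route is self-contained given Theorem \ref{newchase}; yours is marginally longer but makes the equivalence with Theorem \ref{s-d-d-non} transparent. Both are valid.
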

\begin{proof}
	$(2)\Rightarrow (3)$ and $(4)\Rightarrow (5)$: Trivial.
	
	$(2)\Rightarrow (4)$ and $(3)\Rightarrow (5)$: This follows from Proposition \ref{flat-FP-injective}.
	
	$(1)\Rightarrow (2)$: Suppose $R$ is a  $S$-coherent ring. Let $M$ be a finitely presented $R$-module. Then there is an exact sequence $$0\rightarrow Q/K\rightarrow P\rightarrow M\rightarrow 0$$ with $P,Q$ finitely generated projective. It follows by \cite[Theorem 7]{znotescoh} that  $Q/K$ is $S$-finitely presented. So $K$ is	$S$-finite.	And hence there exists	an exact sequence of $R$-modules $$0 \rightarrow T\rightarrow Q/K'\rightarrow Q/K\rightarrow 0,$$ where $K'$ is finitely generated and $T$ is uniformly $S$-torsion.
	
	 Consider the following commutative diagrams with exact rows ($(-,-)$ is instead of $\Hom_R(-,-)$):
	$$\xymatrix@R=20pt@C=15pt{
		(E,I)\otimes_R T\ar[r]^{} \ar[d]^{\psi^1_{T'}} &(E,I)\otimes_R Q/K' \ar[d]_{\psi_{Q/K'}}^{\cong}\ar[r]^{} & (E,I)\otimes_R Q/K \ar[d]_{\psi_{Q/K}}\ar[r]^{}&0\\
		((T,E),I)\ar[r]^{} & ((Q/K',E),I) \ar[r]^{} &((Q/K,E),I)\ar[r]^{} &0,\\ }$$
	and
	$$\xymatrix@R=20pt@C=15pt{
		0\ar[r]^{}&\Tor_1^R((E,I),M) \ar[r]^{}\ar[d]^{\psi^1_{M}} &(E,I)\otimes_R Q/K \ar[d]_{\psi_{Q/K}}\ar[r]^{} & (E,I)\otimes_R Q \ar[d]_{\psi_{Q}}^{\cong}\ar[r]^{} &(E,I)\otimes_R M \ar[d]_{\psi_{M}}^{\cong}\ar[r]^{} &0\\
		0\ar[r]&(\Ext_R^1(M,E),I) \ar[r]^{} &((Q/K,E),I)\ar[r]^{} & ((Q,E),I) \ar[r]^{} &((M,E),I)\ar[r]^{} &0\\ }$$
Note that $\psi_{Q/K'}$ is an isomorphism by  \cite[Proposition 8.14(1)]{hh} and \cite[Theorem 2]{ELMUT1969}. It follows by chasing diagram and  the uniform $S$-version of five-lemma (see \cite[Theorem 1.2]{ztugdocr}) that $\psi^1_{M}$ is a uniformly $S$-isomorphism. Since $E$ is $s$-absolutely pure, $\Ext_R^1(M,E)$ is $u$-$S$-torsion, so is $\Hom_R(\Ext_R^1(M,E),I)$. Hence $\Tor_1^R(\Hom_R(M,E),M)$ is $u$-$S$-torsion , and thus $\Hom_R(M,E)$ is $s$-flat by Proposition \ref{flat-FP-injective}.

$(5)\Rightarrow (1)$: Let $\{F_i\}$ be a family of flat $R$-module. We will show $\prod F_i$ is $s$-flat. Since $\oplus F_i$ is flat,  by assumption $$\Hom_R(\Hom_R(\oplus F_i,I_1),I_2)\cong \Hom_R(\prod\Hom_R(F_i,I_1),I_2)$$ is $s$-flat.  Note that $\oplus\Hom_R(F_i,I_1)$ is a pure submodule of $\prod\Hom_R(F_i,I_1)$. Then the natural epimorphism $$\Hom_R(\prod\Hom_R(F_i,I_1),I_2\rightarrow \Hom_R(\oplus\Hom_R(F_i,I_1),I_2)\rightarrow 0$$ splits. Hence, $\prod\Hom_R(\Hom_R(F_i,I_1),I_2)\cong \Hom_R(\oplus\Hom_R(F_i,I_1),I_2)$ is also $s$-flat. Since $\prod F_i$ is a pure submodule of $\prod \Hom_R(\Hom_R(F_i,I_1),I_2)$, we have  $\prod F_i$ is $s$-flat by Lemma \ref{spq-f}. It follows by Theorem \ref{newchase} that $R$ is $S$-coherent.
\end{proof}

\bigskip

\end{document}